\documentclass[11pt]{amsart}
\usepackage[top=4.2cm, bottom=4cm, left=2.4cm, right=2.4cm]{geometry}
\usepackage[utf8]{inputenc}
\usepackage[USenglish]{babel}
\usepackage[T1]{fontenc} 
\usepackage{mathrsfs}
\usepackage{mathtools}
\usepackage{amsmath}
\usepackage{amssymb,epsfig}
\usepackage{caption}
\usepackage{subcaption}
\usepackage{comment}
\usepackage{stmaryrd}
\usepackage{todonotes}



\usepackage{amsthm}
\usepackage[absolute]{textpos}
\usepackage{mathtools,emptypage}

\usepackage[bookmarks=true]{hyperref}
\usepackage{xcolor}
\hypersetup{
    colorlinks,
    linkcolor={red!50!black},
    citecolor={blue!50!black},
    urlcolor={blue!80!black},
}

\usepackage{todonotes}\setlength{\marginparwidth}{2cm}
\usepackage[capitalise]{cleveref}
\usepackage{fixme}
\fxusetheme{color}

\newcommand{\N}{\mathbb{N}}

\newcommand{\R}{\mathbb{R}}
\newcommand{\Z}{\mathbb{Z}}

\newcommand{\sfd}{{\sf d}}

\renewcommand{\d}{{\mathrm d}}

\newcommand{\restr}[1]{\lower3pt\hbox{$|_{#1}$}}

\newcommand{\X}{{\rm X}}
\newcommand{\Y}{{\rm Y}}
\renewcommand{\Z}{{\rm Z}}

\newcommand{\Lip}{{\rm Lip}}

\newcommand{\olim}{\omega\textup{-}\lim}


\newcommand{\cM}{\mathbf{M}}
\newcommand{\cN}{\mathbf{N}}

\makeatletter
\newcommand{\mytag}[2]{%
  \text{#1}%
  \@bsphack
  \begingroup
    \@onelevel@sanitize\@currentlabelname
    \edef\@currentlabelname{%
      \expandafter\strip@period\@currentlabelname\relax.\relax\@@@%
    }%
    \protected@write\@auxout{}{%
      \string\newlabel{#2}{%
        {\color{black}#1}%
        {\thepage}%
        {\@currentlabelname}%
        {\@currentHref}{}%
      }%
    }%
  \endgroup
  \@esphack
}
\makeatother

\def\XXint#1#2#3{{\setbox0=\hbox{$#1{#2#3}{\int}$ }
\vcenter{\hbox{$#2#3$ }}\kern-.6\wd0}}


\newcommand{\curr}[1]{\llbracket #1 \rrbracket}
\newcommand{\Mass}[1]{\mathbb{M} #1}

\allowdisplaybreaks

\newtheorem{theorem}{Theorem}[section]

\newtheorem{corollary}[theorem]{Corollary}
\newtheorem{lemma}[theorem]{Lemma}
\newtheorem{proposition}[theorem]{Proposition}
\theoremstyle{definition}
\newtheorem{definition}[theorem]{Definition}

\newtheorem{example}[theorem]{Example}

\newtheorem*{theoremnonumber}{Theorem}

\newcounter{Counter}

\newtheorem{remark}[theorem]{Remark}

\title[A characterization of snowflakes via rectifiability]{A characterization of snowflakes via rectifiability}
\author[Emanuele Caputo]{Emanuele Caputo}\address[Emanuele Caputo]{Mathematics Institute, Zeeman Building, University of Warwick, Coventry, CV4 7AL, United Kingdom}\email{emanuele.caputo@warwick.ac.uk}
\author[Nicola Cavallucci]{Nicola Cavallucci}\address[Nicola Cavallucci]{Département de mathématiques,
Université de Fribourg,
Ch. du musée 23
CH-1700 Fribourg,
Switzerland}\email{n.cavallucci23@gmail.com}
\date{\today}

\keywords{Snowflake, fractals, rectifiable sets, currents, ultralimits}
\subjclass[2020]{54E35, 28A80}

\begin{document}

\begin{abstract}
    We prove a generalization of Tyson-Wu's characterization of metric spaces biLipschitz equivalent to snowflakes to every metric space, by removing compactness, doubling and embeddability assumptions. We also characterize metric spaces that are biLipschitz equivalent to a snowflake in terms of the absence of non-trivial metric $1$-currents in every ultralimit, or equivalently in terms of purely $1$-unrectifiability of every ultralimit. Finally, we discuss some applications and examples.
\end{abstract}

\maketitle

\section{Introduction}

A metric space $(\X,\sfd)$ is an $
\alpha$-snowflake, $\alpha \in (0,1)$, if there exist a metric space $(\Y,\rho)$ such that $(\X,\sfd)=(\Y,\rho^\alpha)$. We simply call it a snowflake if it is an $\alpha$-snowflake for some $\alpha$. Snowflake metric spaces appear in fractal geometry and are important in the theory of embeddability of metric spaces into finite-dimensional spaces. Assouad embedding theorem says that a snowflake of a doubling metric space biLipschitz embeds in a finite-dimensional space \cite{Assouad1983}. A map $f\colon (\X,\sfd)\to (\Y,\sfd')$ is biLipschitz if there exists $C\ge 1$ such that
$$C^{-1}\sfd(x,x') \le \sfd'(f(x),f(x')) \le C\sfd(x,x')$$
for every $x,x'\in \X$. 
Two metric spaces are biLipschitz equivalent if there exists a surjective biLipschitz map from one into the other. 

Tyson and Wu, inspired by Assouad's result, proved the following characterization of metric spaces that are biLipschitz equivalent to snowflakes, under finite-dimensionality and embeddability assumptions.
\begin{theoremnonumber}[{\cite[Theorem 1.5]{TysonWu2005}}]
\label{thm:TysonWu}
    Let $(\X,\sfd)$ be a compact, metrically doubling metric space that admits a biLipschitz embedding into a uniformly convex Banach space. Then the following conditions are equivalent.
    \begin{itemize}
        \item[(i)] $(\X,\sfd)$ is biLipschitz equivalent to a snowflake.
        \item[(ii)] Every weak tangent of $(\X,\sfd)$ does not contain non-trivial rectifiable curves. 
    \end{itemize}
\end{theoremnonumber}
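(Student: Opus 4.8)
The skeleton rests on the following reformulation of condition (i): $(\X,\sfd)$ is biLipschitz equivalent to a snowflake if and only if there exist $p>1$ and $c>0$ such that for every finite chain $x_0,x_1,\dots,x_n$ in $\X$ one has
\[
\sum_{i=0}^{n-1}\sfd(x_i,x_{i+1})^{p}\ \ge\ c\,\sfd(x_0,x_n)^{p}.
\]
For the ``only if'' part, if $f\colon(\X,\sfd)\to(\Y,\rho^{\alpha})$ is a surjective biLipschitz map with constant $C$ and $\alpha\in(0,1)$, one sets $p=1/\alpha$ and uses $\sfd(x_i,x_{i+1})^{p}\ge C^{-p}\rho(f(x_i),f(x_{i+1}))$ (here $\alpha p=1$); summing, applying the triangle inequality for $\rho$, and comparing $\rho(f(x_0),f(x_n))$ with $\sfd(x_0,x_n)$ yields the inequality with $c=C^{-2p}$. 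For the ``if'' part, the chain function $\rho(x,y):=\inf\sum_i\sfd(x_i,x_{i+1})^{p}$ (infimum over chains from $x$ to $y$) is subadditive by concatenation and satisfies $c\,\sfd(x,y)^{p}\le\rho(x,y)\le\sfd(x,y)^{p}$, hence is a metric comparable to $\sfd^{p}$, so the identity map $(\X,\sfd)\to(\X,\rho^{1/p})$ is a biLipschitz bijection onto the $(1/p)$-snowflake of $(\X,\rho)$.

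For the implication $(i)\Rightarrow(ii)$: first, an $\alpha$-snowflake $(\Y,\rho^{\alpha})$ with $\alpha\in(0,1)$ carries no non-trivial rectifiable curve, for if $\gamma$ were one then on any partition $\{t_i\}$ one has $\rho(\gamma(t_i),\gamma(t_{i+1}))^{\alpha}\ge M^{\alpha-1}\rho(\gamma(t_i),\gamma(t_{i+1}))$ with $M:=\max_i\rho(\gamma(t_i),\gamma(t_{i+1}))$ and $\alpha-1<0$, so the $\rho^{\alpha}$-length is at least $M^{\alpha-1}\rho(\gamma(0),\gamma(1))$, which blows up as the partition is refined. Next, the chain inequality above is homogeneous under $\sfd\mapsto\lambda\sfd$ and stable under pointed Gromov--Hausdorff limits (chains and the inequality pass to the limit), so every weak tangent of $(\X,\sfd)$ satisfies it with the same $p$ and $c$ and is therefore, by the reformulation, biLipschitz equivalent to a snowflake; since biLipschitz maps preserve non-trivial rectifiable curves, no weak tangent of $(\X,\sfd)$ contains one.

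For the implication $(ii)\Rightarrow(i)$ I would argue by contraposition, and this is the heart of the matter. Assume $(\X,\sfd)$ is not biLipschitz equivalent to a snowflake; by the reformulation, for every $p>1$ the chain infimum is zero. Choosing $p_n\downarrow1$ and $\varepsilon_n\downarrow0$, pick points $a_n\ne b_n$ and chains $a_n=x_0^n,\dots,x_{k_n}^n=b_n$ with $\sum_i\sfd(x_i^n,x_{i+1}^n)^{p_n}<\varepsilon_n\,\sfd(a_n,b_n)^{p_n}$, and rescale so that $\sfd(a_n,b_n)=1$. Metric doubling and compactness yield, along a subsequence, a weak tangent $\X_\infty$ of $(\X,\sfd)$ in which $a_n$ and $b_n$ converge to distinct points $a_\infty,b_\infty$ with $\sfd_\infty(a_\infty,b_\infty)=1$. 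The task is then to extract from the chains a \emph{rectifiable} curve from $a_\infty$ to $b_\infty$ inside $\X_\infty$: since $p_n\downarrow1$, the constraint $\sum_i\sfd(x_i^n,x_{i+1}^n)^{p_n}<\varepsilon_n$ forces the chains to be almost additive, and here the embedding $\X\subset E$ into a uniformly convex Banach space is used — an almost-additive chain in a uniformly convex space lies uniformly close to the segment joining its endpoints, with closeness governed by the modulus of convexity, so in the limit the chains accumulate onto a curve of finite length, hence non-trivial and rectifiable, joining $a_\infty$ and $b_\infty$, contradicting (ii) for $\X_\infty$.

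The step I expect to be the main obstacle is precisely this last one: producing an honest rectifiable curve rather than an uncontrolled continuum. Two points must be handled. First, the combinatorics: the number of vertices $k_n$ and the total length $\sum_i\sfd(x_i^n,x_{i+1}^n)$ must be bounded uniformly in $n$, so that the Gromov--Hausdorff limit of the chains is a curve of length in $[1,\infty)$ rather than a merely connected set; I would enforce this by replacing each chain with a near-minimizer of a suitable functional and bounding its combinatorics via doubling. Second, the passage from Gromov--Hausdorff subconvergence of finite vertex sets to convergence to a curve, which is where the quantitative straightening coming from the modulus of convexity is essential. These are the places where compactness, metric doubling, and uniform convexity, respectively, are genuinely needed, and where the argument is most delicate.
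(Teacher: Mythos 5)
Your reformulation of (i) as a chain condition ($\exists\, p>1,\,c>0$ with $\sum_i \sfd(x_i,x_{i+1})^p\ge c\,\sfd(x_0,x_n)^p$ for all chains) is correct in both directions, and your implication (i)$\Rightarrow$(ii) is sound: snowflakes carry no rectifiable curves and the chain condition passes to weak tangents. Note, though, that the paper does not prove this theorem at all — it cites it from Tyson--Wu and instead proves a strictly stronger statement (Theorem \ref{thm:characterization_snowflakes}) with no compactness, doubling, or embeddability hypotheses, by an entirely different route: not-snowflake $\Rightarrow$ line-fitting $\Rightarrow$ failure of the SRA$_k(\varepsilon,\alpha)$ condition for all $(k,\varepsilon,\alpha)$, which produces \emph{equally-spaced, almost-collinear} configurations whose ultralimit is shown to be a genuine geodesic by a Cauchy--Schwarz rigidity argument (with no ambient linear structure). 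The equal spacing is what makes the limit curve $\gamma(\tau)=\olim x_n^{\lfloor\tau n\rfloor}$ well defined and parametrizable; your route never manufactures it.

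The genuine gap is in your contrapositive step, and it is not merely the deferred technical work you flag: the specific claim that ``since $p_n\downarrow 1$, the constraint $\sum_i\sfd(x_i^n,x_{i+1}^n)^{p_n}<\varepsilon_n$ forces the chains to be almost additive'' is false. A chain of $N$ equal steps of length $L/N$ has $\sum_i \sfd_i = L$ but $\sum_i \sfd_i^{p} = L^{p}N^{1-p}$, which is smaller than any $\varepsilon_n$ once $N$ is large, \emph{for every fixed} $L$. So the chains produced by negating your reformulation have no upper bound on their total length or cardinality, hence are not almost additive, and the uniform-convexity straightening lemma you invoke (which genuinely requires $\sum_i\|x_{i+1}-x_i\|\le(1+\delta)\|x_n-x_0\|$) has nothing to bite on. Consequently the Gromov--Hausdorff sublimits of your vertex sets are at best continua with no control on $\mathcal H^1$, and no rectifiable curve is extracted. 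To close this you would need either a selection mechanism that actually yields almost-additive (better: almost equally spaced) chains — which is precisely what the line-fitting/SRA machinery of Lemma \ref{lemma:SRA_implies_not_line_fitting} and Proposition \ref{prop:snowflake_iff_not_line_fitting} provides — or a genuinely different compactness argument; as written, the heart of the implication (ii)$\Rightarrow$(i) is missing.
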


In their work, a weak tangent of $(\X,\sfd)$ is a pointed Gromov-Hausdorff limit of a sequence of pointed metric spaces $\{(\X,r_n\sfd,x_n)\}_{n\in \N}$ with $r_n > 0$ and $x_n \in \X$ for every $n \in \N$. The metrically doubling assumption is needed in order to ensure that some pointed Gromov-Hausdorff limits exist. On the other hand, the biLipschitz embeddability into some uniformly convex Banach space is used to prove some intermediate condition exploiting the linearity and the convexity properties of the ambient space. However, this embeddability property is not related to any of the equivalent conditions in the thesis.

The first aim of this work is to prove a general version of Tyson-Wu's theorem holding for every metric space. The second goal is to give other characterizations in terms of two classical concepts in geometric measure theory in metric spaces: rectifiability and metric currents. 

\begin{theorem}
\label{thm:main}
    Let $(\X,\sfd)$ be a metric space. Then the following facts are equivalent.
    \begin{itemize}
        \item[(i)] $(\X,\sfd)$ is biLipschitz equivalent to a snowflake.
        \item[(ii)] Every ultralimit of $(\X,\sfd)$ does not contain non-trivial rectifiable curves.
        \item[(iii)] Every ultralimit of $(\X,\sfd)$ is purely $1$-unrectifiable.
        \item[(iv)] Every ultralimit of $(\X,\sfd)$ has no non-trivial metric $1$-currents with inner-regular mass.
    \end{itemize}
\end{theorem}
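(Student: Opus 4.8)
The plan is to prove the cycle $(i)\Rightarrow(ii)\Rightarrow(iii)\Rightarrow(iv)\Rightarrow(i)$, recording along the way the soft reverse implications $(iii)\Rightarrow(ii)$ and $(iv)\Rightarrow(ii)$. Conceptually, $(i)\Leftrightarrow(ii)$ is a version of \cite[Theorem 1.5]{TysonWu2005} freed from the metrically doubling and embeddability hypotheses: ultralimits exist for every metric space, so they replace pointed Gromov--Hausdorff limits, and metric currents are defined in an arbitrary metric space, so the theory of normal currents can play the role of the uniformly convex Banach space used in \cite{TysonWu2005}. The equivalences $(ii)\Leftrightarrow(iii)\Leftrightarrow(iv)$ are geometric measure theory in metric spaces, relating non-trivial rectifiable curves, $1$-rectifiable subsets of positive finite $\mathcal{H}^1$-measure, and metric $1$-currents.

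First the soft parts. Raising a metric to a power $\alpha\in(0,1)$ commutes with ultralimits and with biLipschitz maps, so every ultralimit of a space biLipschitz equivalent to an $\alpha$-snowflake is itself biLipschitz equivalent to an $\alpha$-snowflake; and an $\alpha$-snowflake $(\Y,\rho^\alpha)$ has no non-constant rectifiable curve, since an arclength reparametrisation of one would be a $(1/\alpha)$-Hölder map of an interval into $(\Y,\rho)$, hence constant as $1/\alpha>1$. With biLipschitz invariance of rectifiable curves this gives $(i)\Rightarrow(ii)$. Conversely, a non-constant rectifiable curve has a $1$-rectifiable image of positive finite $\mathcal{H}^1$-measure, so the ultralimit is not purely $1$-unrectifiable, and a compact piece of this image, suitably oriented, is a non-zero metric $1$-current whose mass is $\mathcal{H}^1$ restricted to a compact set, hence inner-regular; so $(iii)\Rightarrow(ii)$ and $(iv)\Rightarrow(ii)$.

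For $(ii)\Rightarrow(iii)$ I argue by contraposition via a blow-up, using that an ultralimit of an ultralimit of $\X$ is again an ultralimit of $\X$. If an ultralimit $\Y$ of $\X$ is not purely $1$-unrectifiable, after restriction we get a Lipschitz map $f\colon K\to\Y$ with $K\subseteq\R$ compact of positive measure and $\mathcal{H}^1(f(K))>0$; by Kirchheim's theorem $f$ is metrically differentiable a.e.\ on $K$, and the area formula $\mathcal{H}^1(f(K))\le\int_K\mathrm{md}f\,\d t$ forces $\mathrm{md}f>0$ on a set of positive measure, so we may fix a Lebesgue density point $t_0$ of $K$ at which $f$ is metrically differentiable with derivative $c>0$. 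Blowing up $\Y$ at $f(t_0)$ with scales $r_n\to\infty$ produces a further ultralimit $\Y'$ of $\X$ in which $v\mapsto\lim_{\omega}[f(s_n)]$, with $s_n\in K$ nearest to $t_0+v/r_n$, is a well-defined $c$-Lipschitz map $\R\to\Y'$ whose restriction to $[0,1]$ is a rectifiable curve, non-constant because its endpoints lie at distance $c>0$: this contradicts $(ii)$. The Lipschitz bound rests on a telescoping estimate for the distance in $\Y$ between two images $f(s),f(s')$ along chains of points of $K$ passing through $t_0$, available because $K$ has density one there. For $(iii)\Rightarrow(iv)$ I relate, in each ultralimit, pure $1$-unrectifiability to the absence of non-trivial metric $1$-currents with inner-regular mass; producing a $1$-rectifiable set of positive measure out of such a current --- the substantial direction --- rests on decomposing metric $1$-currents into Lipschitz curves (after reducing to a normal current, or after a blow-up), at least one of which must be non-constant. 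Applied to each ultralimit this gives $(iv)$, the converse being contained in $(iv)\Rightarrow(ii)\Rightarrow(iii)$.

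It remains to prove $(iv)\Rightarrow(i)$, which I do by contraposition and which carries the main difficulty. If $\X$ is not biLipschitz equivalent to any snowflake, then for every $\alpha\in(0,1)$ no metric on $\X$ is comparable to the function $\sfd^{1/\alpha}$; unwinding the associated chain-infimum construction, this produces at a sequence of scales $r_n\to0$ configurations of points that are almost geodesic relative to $r_n$ and become progressively finer, and in a suitable ultralimit these become an honest geodesic segment --- in particular a non-trivial rectifiable curve, along which the pushforward of $\curr{[0,1]}$ is a non-zero normal $1$-current, necessarily with inner-regular mass --- so $\neg(iv)$ (and $\neg(ii)$). The serious work here is (a) upgrading the purely qualitative hypothesis ``no non-trivial current or curve in \emph{any} ultralimit'' to a \emph{uniform} quantitative snowflaking estimate valid simultaneously at all scales and basepoints, and then reconstructing the snowflake metric, i.e.\ carrying out the hard half of \cite[Theorem 1.5]{TysonWu2005} with ultralimits in place of Gromov--Hausdorff limits (so that all the necessary compactness is routed through ultralimits rather than through metric doubling) and with the compactness and decomposition theory of normal metric currents in place of the ambient uniformly convex Banach space; and (b), a recurring secondary issue in $(ii)\Rightarrow(iii)$ and $(iii)\Leftrightarrow(iv)$, controlling inner-regularity of the mass and blow-ups of sets and of currents inside ultralimits, which are complete but in general far from locally compact.
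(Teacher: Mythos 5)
Your architecture $(i)\Rightarrow(ii)\Rightarrow(iii)\Rightarrow(iv)\Rightarrow(i)$ is sound, and the geometric-measure-theory equivalences $(ii)\Leftrightarrow(iii)\Leftrightarrow(iv)$ are essentially what the paper does: your blow-up at a point of metric differentiability is a direct proof of the input the paper imports from \cite[Theorem 6.6]{Bate2022}, and the passage from a non-trivial current with inner-regular mass to a non-trivial $1$-rectifiable set is the curve-fragment decomposition of \cite{BateCaputoTakacValentineWald2025} applied after restricting to the closure of a $\sigma$-compact carrier of the mass, exactly as in the proof of Theorem \ref{thm:snowflake_currents}. Since the statement quantifies over all ultralimits, the change of ultrafilter hidden in ``an ultralimit of an ultralimit is an ultralimit'' is harmless for you (the paper only needs Proposition \ref{prop:compact_subsets_different_ultralimits} to fix $\omega$ in its stronger version).

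The genuine gap is in $(iv)\Rightarrow(i)$, which you correctly single out as the crux but do not prove. Your plan is: contraposition, ``unwind the chain-infimum construction'' to extract, at scales $r_n\to 0$, almost-geodesic and progressively finer point configurations, and pass to an ultralimit. Both halves of this are left as declarations of intent. Extracting from ``$(\X,\sfd)$ is not biLipschitz equivalent to any snowflake'' chains that are simultaneously almost geodesic \emph{and} have mesh tending to zero relative to their diameter is precisely the content of the line-fitting characterization \cite[Theorem 7.2]{TysonWu2005} (Proposition \ref{prop:snowflake_iff_not_line_fitting}), which already holds for arbitrary metric spaces; you instead propose to ``carry out the hard half of \cite[Theorem 1.5]{TysonWu2005} with ultralimits in place of Gromov--Hausdorff limits'', i.e.\ to reconstruct the snowflake metric from a uniform quantitative estimate — a reconstruction that is both unnecessary and absent from your text. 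The paper's actual route is different and avoids this entirely: it introduces the condition SRA$_k(\varepsilon,\alpha)$ (Definition \ref{defin:SRA_equally_spaced}), shows SRA$_k$ $\Rightarrow$ not line-fitting (Lemma \ref{lemma:SRA_implies_not_line_fitting}) $\Rightarrow$ snowflake, and proves the new implication ``no geodesic in any ultralimit $\Rightarrow$ SRA$_k$ for some parameters'' by contradiction: from $n$ nearly equally spaced, nearly collinear points it builds the curve $\gamma(\tau)=\olim x_n^{\lfloor\tau n\rfloor}$ in an ultralimit and proves it is a geodesic via a Cauchy--Schwarz equality-case rigidity argument together with a nonstandard dominated convergence theorem (Proposition \ref{prop:dominated_convergence}) showing the limiting speed is a.e.\ constant. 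Nothing in your proposal substitutes for either the extraction of the configurations or the rigidity step that turns them into an honest geodesic (note that without quantitative control on the spacing one cannot even define the limit parametrization), so the implication from $(ii)$/$(iv)$ back to $(i)$ remains unproved.
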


Ultralimits are generalization of pointed Gromov-Hausdorff limits and apply also in the non-proper setting, see \cite{KapovichDrutu2018}. An advantage of this theory is that ultralimits always exist. Asymptotic cones, tangent cones and ultracompletions of a metric space $(\X,\sfd)$ belong to the class of ultralimits of $(\X,\sfd)$. We refer to Section \ref{subsec:ultralimits} for more details on ultralimits and their relation with pointed Gromov-Hausdorff limits. 

We will prove a slightly stronger version of Theorem \ref{thm:main}, in which one can express conditions (ii)-(iv) in terms of every fixed non-principal ultrafilter. The equivalence between (i) and (ii) is precisely the extension of Tyson-Wu's result to general metric spaces. This is done through other intermediate conditions: the line-fitting property due to T. Laakso and introduced in the appendix of \cite{TysonWu2005} and a new condition on the upper bound on the number of almost equally spaced points that are almost collinear, see Definition \ref{thm:characterization_snowflakes} and Theorem \ref{defin:SRA_equally_spaced}. This condition is inspired by the notion of small rough angles introduced in \cite{DurandCartagenaTyson2025}.
The equivalence between (ii) and (iii) follows by \cite{Bate2022} and the fact that ultralimits of ultralimits are ultralimits, but with respect to a maybe different non-principal ultrafilter. The fact that one can choose a fixed non-principal ultrafilter is a consequence of this principle: every compact subset of an ultralimit is isometric to a Gromov-Hausdorff limit of a sequence of subsets. This is proven and formalized in Proposition \ref{prop:compact_subsets_different_ultralimits} and Remark \ref{rem:GH_to_ultralimits}. The equivalence between (iii) and (iv) follows by a Smirnov-type representation results for metric $1$-currents as a superposition of rectifiable $1$-sets. This is proven independently in \cite{BateCaputoTakacValentineWald2025, ArroyoRabasaBouchitte2025}. The restriction to metric $1$-currents with inner-regular mass is due to the fact that the ultralimits of a metric space are not separable in general.

We discuss some applications of our theory. First, Tyson-Wu presents a counterexample to their main theorem in the case of non-doubling $(\X,\sfd)$. This is a counterexample if we consider pointed Gromov-Hausdorff limits, but not ultralimits. Indeed, we provide an explicit computation that shows that some ultralimit contains a non-trivial geodesic, see Remark \ref{ex:TysonWu}.

Secondly, we prove that, in the class of quasi-selfsimilar spaces, it is enough to check the absence of rectifiable curves or metric currents only in the metric space itself; see Proposition \ref{prop:snowflake_quasi-selfsimilar}. In particular, in view of Proposition \ref{prop:currents_purelyunrect}, this result can be read as follows: in the class of quasi-selfsimilar spaces, the only purely $1$-unrectifiable spaces are biLipschitz equivalent to snowflakes.

A third application shows that a product of two metric spaces is biLipschitz equivalent to a snowflake if and only if both factors are, see Proposition \ref{prop:products}.

In a future work, we will study natural generalizations of conditions (ii)-(iv) of Theorem \ref{thm:main} to higher dimensions, the relations between them and between the metric properties of the space.

\section*{Acknowledgments}
E.C. is supported by the European Union’s Horizon 2020 research
and innovation programme (Grant agreement No. 948021).
We thank J. Mackay for fruitful discussions while the first-named author was in Bristol that led to this work.

\section{Preliminaries}

Let $(\X,\sfd)$ be a metric space. Given $x \in \X$ and $r>0$, we denote by $B(x,r):=\{y \in \X:\, \sfd(y,x)< r\}$ and by $\overline{B}(x,r):=\{y \in \X:\, \sfd(y,x) \le r\}$. A curve is a continuous map $\gamma \colon [0,1] \to \X$.
The length of a curve $\gamma$ is defined as
\begin{equation}
    \label{eq:defin_length_curve}
    \ell(\gamma):=\sup \left\{ \sum_{i=0}^{N-1} \sfd(\gamma({t_i}),\gamma(t_{i+1}))\,:\,0=t_0 < t_1 < \dots < t_N=1,\, N \in \mathbb{N}
    \right\}.
\end{equation}
A curve $\gamma\colon [0,1]\to\X$ such that $\ell(\gamma)<\infty$ is called rectifiable. We say that a curve $\gamma \colon [0,1]\to \X$ is a geodesic if $\sfd(\gamma(t),\gamma(s))=|t-s|\sfd(\gamma(0),\gamma(1))$ for every $t,s \in [0,1]$. In this case, $\ell(\gamma)=\sfd(\gamma(0),\gamma(1))<\infty$.

\begin{remark}
\label{rem:no_rect_in_snowflake}

A snowflake metric space has no non-constant rectifiable curves. This is well-known. For instance, it can be seen as a consequence of the computations in \cite[Example 6.2]{CaputoCavallucci2025III}.
\end{remark}

Every three points $x,y,z$ in an $\alpha$-snowflake metric space $(\X,\sfd^\alpha)$ are quantitatively non-collinear in the following sense:
\begin{equation}
    \label{eq:SRA_condition}
    \sfd(x,y) \le \max\{ \sfd(x,z) + (2^\alpha - 1)\sfd(z,y), (2^\alpha - 1)\sfd(x,z) + \sfd(z,y)\},
\end{equation}
see \cite[Lemma 2.10]{DurandCartagenaTyson2025}. A metric space in which every triple of points satisfy \eqref{eq:SRA_condition} is said to have \emph{rough angles smaller than $2^\alpha -1$}. This notion has been introduced in \cite{DurandCartagenaTyson2025}, inspired by the methods of \cite{LeDonneRajalaWalsberg2018}. We introduce another definition that shares similiarities with the small rough angle condition.

\begin{definition}[Small rough angle for equally-spaced points]
\label{defin:SRA_equally_spaced}
    Let $\varepsilon,\alpha > 0$ and $k\in \N$.
    We say that $(\X,\sfd)$ satisfies the \emph{$\alpha$-small rough angle condition for $(k,\varepsilon)$-equally-spaced points}, and we say that $(\X,\sfd)$ satisfies SRA$_k(\varepsilon,\alpha)$ condition, if the following holds. For every $x_1,\ldots,x_k \in \X$ that satisfy $\max\{\sfd(x_i,x_{i+1}):\, i=1,\dots,k-1\} \le (1+\varepsilon)\min\{\sfd(x_i,x_{i+1}):\, i=1,\dots,k-1\}$, then 
    \begin{equation}
        \label{eq:SRA}
        \sum_{i=1}^{k-1}\sfd(x_{i},x_{i+1}) \ge (1+\alpha)\sfd(x_1,x_k).
    \end{equation}
\end{definition}

\begin{remark}
    The roles of the constants in the above definition are as follows. The integer $k$ refers to the number of points that we consider. The constant $\varepsilon$ expresses how close the $k$ points are from being equally spaced, while the constant $\alpha$ expresses the defect from being collinear.
\end{remark}

\begin{remark}
    Having rough angles smaller than $\beta < 1$ is not a biLipschitz invariant. Indeed, consider in the Euclidean plane the set $\X=\{(-1,0),(0,0),(1,0)\}$. It does not satisfy \eqref{eq:SRA_condition} for any $\beta <1$ when endowed with the Euclidean distance $\sfd_e$. However, the set $\Y=\{(-1,0),(0,y),(1,0)\}$ for $y>0$, endowed with the Euclidean distance, satisfies \eqref{eq:SRA_condition} for $\beta \ge \frac{2}{\sqrt{1+y^2}}-1$ and it is biLipschitz equivalent to $(\X,\sfd_e)$. 
    We will see in Theorem \ref{thm:characterization_snowflakes} that the property
    \begin{equation*}
        (\X,\sfd)\text{ satisfies SRA$_k(\varepsilon,\alpha)$ for some }k \in \N,\varepsilon>0,\alpha>0
    \end{equation*}
    is a biLipschitz invariant.
\end{remark}

Given a metric space $(\X,\sfd)$, a subset $A \subset \X$ and $r>0$ we define $B_\sfd(A,r):=\{ x \in \X:\, \sfd(x,z)< r \text{ for some }z \in A\}$. We need the following definition, which was firstly considered by Tyson-Wu in \cite[Appendix]{TysonWu2005}, after a private communication of the authors with T. Laakso.

\begin{definition}[Line-fitting]
    We say that $(\X,\sfd)$ is \emph{line-fitting} if for every $n\in \N$ there exists a metric $\sfd_n$ on $\X \sqcup [0,1]$ such that
    \begin{itemize}
        \item[(i)] the restriction of $\sfd_n$ on $[0,1]$ is the Euclidean metric;
        \item[(ii)] the restriction of $\sfd_n$ on $\X$ coincides with $c_n\sfd$, for some constant $c_n > 0$;
        \item[(iii)] $[0,1] \subseteq B_{\sfd_n}(\X,\frac{1}{n})$.
    \end{itemize}
\end{definition}

This condition has been used to characterize metric spaces that are biLipschitz equivalent to a snowflake.

\begin{proposition}[{\cite[Theorem 7.2]{TysonWu2005}}]
\label{prop:snowflake_iff_not_line_fitting}
    A metric space $(\X,\sfd)$ is biLipschitz equivalent to a snowflake if and only if it is not line-fitting.
\end{proposition}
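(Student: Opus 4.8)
The plan is to prove both implications; the engine is the classical \emph{chain--metrization principle}: a symmetric function $q\colon\X\times\X\to[0,\infty)$ vanishing exactly on the diagonal is biLipschitz equivalent to a metric if and only if there is $C\ge 1$ with $q(x_0,x_N)\le C\sum_{i=1}^N q(x_{i-1},x_i)$ for every finite chain $x_0,\dots,x_N$ in $\X$; for the nontrivial direction one checks that $\rho(x,y):=\inf\{\sum_i q(x_{i-1},x_i)\}$, the infimum over chains from $x$ to $y$, is a metric with $C^{-1}q\le\rho\le q$. Applying this to $q=\sfd^\beta$ with $\beta>1$, and noting that $(\X,\sfd)$ is biLipschitz equivalent to a snowflake precisely when $\sfd^{1/\alpha}$ is biLipschitz equivalent to a metric for some $\alpha\in(0,1)$, one obtains the reformulation that $(\X,\sfd)$ is biLipschitz equivalent to a snowflake if and only if there are $\beta>1$ and $C\ge 1$ with
\begin{equation}
\sfd(x_0,x_N)^\beta\ \le\ C\sum_{i=1}^N\sfd(x_{i-1},x_i)^\beta\qquad\text{for every finite chain }x_0,\dots,x_N\in\X.
\tag{$\star$}
\end{equation}

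\emph{From snowflake to not line-fitting.} Assume $(\star)$ holds and, for contradiction, that $(\X,\sfd)$ is line-fitting. Given $m$ large, take $\sfd_m$ on $\X\sqcup[0,1]$ and $c_m>0$ as in the definition, set $N:=\lceil m^{1/2}\rceil$ and $t_i:=i/N$, and choose $x_i\in\X$ with $\sfd_m(x_i,t_i)<1/m$. The triangle inequality gives $\sfd_m(x_{i-1},x_i)<\tfrac1N+\tfrac2m$ and $\sfd_m(x_0,x_N)>1-\tfrac2m$; dividing by $c_m$ and inserting into $(\star)$ yields, after cancelling $c_m^{-\beta}$,
\[
(1-\tfrac2m)^\beta\ <\ C\,N^{1-\beta}\,(1+\tfrac{2N}m)^\beta .
\]
Letting $m\to\infty$, the left-hand side tends to $1$ while the right-hand side tends to $0$, since $\beta>1$, $N\to\infty$ and $N/m\to 0$; contradiction. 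This is the quantitative form of Remark \ref{rem:no_rect_in_snowflake}: a biLipschitz image of a snowflake cannot carry arbitrarily long chains that, after rescaling, look nearly like a straight segment (compare \eqref{eq:SRA_condition}).

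\emph{From not line-fitting to snowflake.} Suppose $(\star)$ fails for every $\beta>1$ and every $C$; we must produce, for each $m$, a metric $\sfd_m$ on $\X\sqcup[0,1]$ witnessing line-fitting. Call a chain $x_0,\dots,x_N$ in $\X$ \emph{$\varepsilon$-straight} if, with $S:=\sum_i\sfd(x_{i-1},x_i)$, one has $\sfd(x_0,x_N)\ge(1-\varepsilon)S$ and $\max_i\sfd(x_{i-1},x_i)\le\varepsilon S$. Given an $\varepsilon$-straight chain, put $c_m:=1/S$ and $\sigma_i:=c_m\sum_{l\le i}\sfd(x_{l-1},x_l)\in[0,1]$, so $\sigma_0=0$, $\sigma_N=1$, consecutive gaps are $\le\varepsilon$, and near-straightness forces $c_m\sfd(x_i,x_j)\in[\sigma_j-\sigma_i-\varepsilon,\ \sigma_j-\sigma_i]$ for $i<j$; thus $\{(x_i,\sigma_i)\}$ is an $\varepsilon$-distortion correspondence with $\varepsilon$-dense second projection. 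Declaring $\sfd_m$ to be $c_m\sfd$ on $\X$, the Euclidean metric on $[0,1]$, and $\sfd_m(x,t):=\min_i\big(c_m\sfd(x,x_i)+\varepsilon+|t-\sigma_i|\big)$ across the two pieces, one checks directly (the $+\varepsilon$ buffer makes all mixed triangle inequalities go through) that $\sfd_m$ is a metric with the prescribed restrictions and $[0,1]\subseteq B_{\sfd_m}(\X,2\varepsilon)$; taking $\varepsilon\le\tfrac1{3m}$ proves line-fitting. Everything therefore reduces to the \emph{extraction step}: from the failure of all the inequalities $(\star)$, produce an $\varepsilon$-straight chain for each $\varepsilon>0$ (necessarily with $N\to\infty$ as $\varepsilon\to0$).

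The extraction is the heart of the matter, and is in essence the argument in the appendix of \cite{TysonWu2005} (the line-fitting notion being due to Laakso); I would follow it. The small-step requirement is free: any chain witnessing $\sfd(x_0,x_N)^\beta>C\sum_i\sfd(x_{i-1},x_i)^\beta$ has $\max_i\sfd(x_{i-1},x_i)<S/C^{1/\beta}$, which is $\le\varepsilon S$ once $C\ge\varepsilon^{-\beta}$. The obstacle is near-straightness: such a chain might wind, with $\sfd(x_0,x_N)$ far below $S$. One forces $\sfd(x_0,x_N)/S$ close to $1$ by (i) working with a \emph{large} exponent $\beta$, since $\sfd(x_0,x_N)^\beta>C\sum_i\sfd(x_{i-1},x_i)^\beta\ge C\,N^{1-\beta}S^\beta$ gives $\sfd(x_0,x_N)/S>(C/N^{\beta-1})^{1/\beta}$, and (ii) passing to a chain with as few edges as possible among those witnessing the chosen $(\beta,C)$, so that $N^{\beta-1}$ is only slightly larger than $C$ — together with a subdivision/averaging argument on the selected chain ensuring its defect $S-\sfd(x_0,x_N)$ is negligible. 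Making these estimates cooperate, exhibiting a single chain that is simultaneously long, nearly straight, and of uniformly small steps, is the only genuinely delicate point; in the terminology of Definition \ref{defin:SRA_equally_spaced}, this step is precisely the assertion that a space failing every SRA$_k(\varepsilon,\alpha)$ condition is line-fitting.
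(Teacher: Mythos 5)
The paper gives no proof of this proposition at all: it is imported verbatim as \cite[Theorem 7.2]{TysonWu2005}, and the only direction the paper genuinely needs as external input in the cycle of Theorem \ref{thm:characterization_snowflakes} is ``not line-fitting $\Rightarrow$ biLipschitz equivalent to a snowflake'' (the converse also follows from the chain (i)$\Rightarrow\cdots\Rightarrow$(v) proved there). So there is no internal argument to compare against, and I judge your proposal on its own. Your reduction to the chain condition $(\star)$ via the chain-metrization principle is correct, and the direction ``snowflake $\Rightarrow$ not line-fitting'' is complete: the estimates $\sfd_m(x_{i-1},x_i)<\tfrac1N+\tfrac2m$ and $\sfd_m(x_0,x_N)>1-\tfrac2m$, with $N=\lceil m^{1/2}\rceil$, do yield the contradiction $(1-\tfrac2m)^\beta<CN^{1-\beta}(1+\tfrac{2N}m)^\beta$ with right-hand side tending to $0$. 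The construction of $\sfd_m$ from an $\varepsilon$-straight chain is also sound: the bound $c_m\sfd(x_i,x_j)\in[\sigma_j-\sigma_i-\varepsilon,\,\sigma_j-\sigma_i]$ together with the $+\varepsilon$ buffer does make all mixed triangle inequalities go through.

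The genuine gap is exactly the step you flag yourself: extracting, from the failure of $(\star)$ for every $\beta>1$ and $C\ge1$, a single chain that is simultaneously of small steps and nearly straight. This is not a routine verification; it is the entire mathematical content of Tyson--Wu's Theorem 7.2 (Laakso's argument), and your two sketched devices do not visibly combine to produce it. The power-mean bound $\sfd(x_0,x_N)/S>(C/N^{\beta-1})^{1/\beta}$ forces near-straightness only if $N^{\beta-1}$ is close to $C$; choosing a violating chain with minimal $N$ gives $N^{\beta-1}>C$ but no upper bound on $N^{\beta-1}/C$. What minimality actually yields is a local reverse-triangle inequality at each interior vertex (deleting a vertex must destroy the violation), which is weaker and different information, and the ``subdivision/averaging argument'' that is supposed to control the defect $S-\sfd(x_0,x_N)$ is asserted but never supplied. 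Since this implication is precisely the one the paper outsources to \cite{TysonWu2005}, your proposal in effect re-cites the source for the crux rather than proving it: either cite Theorem 7.2 outright, as the paper does, or supply the extraction lemma in full.
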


The negation of the line-fitting property is also related (actually equivalent as we will see in Theorem \ref{thm:characterization_snowflakes}) to the $\alpha$-small rough angle condition for $(k,\varepsilon)$-equally-spaced points for some $k,\varepsilon,\alpha$.

\begin{lemma}
    \label{lemma:SRA_implies_not_line_fitting}
    Let $(\X,\sfd)$ be a metric space satisfying the \textup{SRA}$_k(\varepsilon,\alpha)$ condition for some $k\in \N$, $\alpha,\varepsilon > 0$. Then it is not line-fitting.
\end{lemma}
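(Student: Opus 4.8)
The plan is to argue by contradiction: assume $(\X,\sfd)$ satisfies SRA$_k(\varepsilon,\alpha)$ yet is line-fitting, and produce, for a suitably large $n$, a $k$-tuple of points in $\X$ that is $(k,\varepsilon)$-equally-spaced but almost collinear, in violation of \eqref{eq:SRA}. The idea is that an exactly collinear, equally-spaced configuration inside $[0,1]$ can be pulled back to $\X$ with error $O(1/n)$, and this perturbation is too small to destroy the equally-spaced property while it also keeps the configuration $(1+O(1/n))$-collinear, which eventually contradicts the uniform lower bound $(1+\alpha)$.

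Concretely, fix $n\in\N$ to be chosen at the end and let $\sfd_n$ be the metric on $\X\sqcup[0,1]$ from the line-fitting property, so that $\sfd_n|_{[0,1]}$ is Euclidean, $\sfd_n|_\X=c_n\sfd$, and $[0,1]\subseteq B_{\sfd_n}(\X,\tfrac1n)$. Put $t_i:=\tfrac{i-1}{k-1}\in[0,1]$ for $i=1,\dots,k$, so $\sfd_n(t_i,t_{i+1})=\tfrac{1}{k-1}$ and $\sfd_n(t_1,t_k)=1$, and use property (iii) to choose $x_i\in\X$ with $\sfd_n(x_i,t_i)<\tfrac1n$. The triangle inequality then gives
\begin{equation*}
    \tfrac{1}{k-1}-\tfrac{2}{n}\;<\;\sfd_n(x_i,x_{i+1})\;<\;\tfrac{1}{k-1}+\tfrac{2}{n},\qquad \sfd_n(x_1,x_k)\;>\;1-\tfrac{2}{n}.
\end{equation*}
Since $\sfd=c_n^{-1}\sfd_n$ on $\X$, the factor $c_n$ is harmless: once $n>2(k-1)$ and $n$ is large enough that $\tfrac{1}{k-1}+\tfrac2n\le(1+\varepsilon)\bigl(\tfrac{1}{k-1}-\tfrac2n\bigr)$ (i.e.\ $n\ge \tfrac{2(2+\varepsilon)(k-1)}{\varepsilon}$), the points $x_1,\dots,x_k$ satisfy the $(k,\varepsilon)$-equally-spaced hypothesis of Definition~\ref{defin:SRA_equally_spaced}.

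Applying SRA$_k(\varepsilon,\alpha)$ to this tuple yields $\sum_{i=1}^{k-1}\sfd(x_i,x_{i+1})\ge(1+\alpha)\sfd(x_1,x_k)$, and multiplying by $c_n$ and inserting the bounds above gives
\begin{equation*}
    1+\tfrac{2(k-1)}{n}\;>\;\sum_{i=1}^{k-1}\sfd_n(x_i,x_{i+1})\;\ge\;(1+\alpha)\,\sfd_n(x_1,x_k)\;>\;(1+\alpha)\bigl(1-\tfrac2n\bigr),
\end{equation*}
hence $\alpha<\tfrac{2(k+\alpha)}{n}$, that is $n<\tfrac{2(k+\alpha)}{\alpha}$. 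Choosing $n$ larger than both $\tfrac{2(2+\varepsilon)(k-1)}{\varepsilon}$ and $\tfrac{2(k+\alpha)}{\alpha}$ — which is possible since both thresholds depend only on the fixed data $k,\varepsilon,\alpha$ — produces a contradiction, so $(\X,\sfd)$ is not line-fitting.

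The proof is essentially bookkeeping of triangle-inequality error terms, so I do not expect a genuine obstacle; the only point requiring care is checking that the two demands on $n$ (enough slack for the pulled-back points to still be $(k,\varepsilon)$-equally-spaced, and enough rigidity for near-collinearity to clash with the $(1+\alpha)$ bound) are simultaneously satisfiable, which the explicit thresholds above confirm.
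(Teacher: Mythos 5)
Your proof is correct and follows essentially the same route as the paper: pull back equally-spaced points of $[0,1]$ to $\X$ using the $\tfrac1n$-neighbourhood condition, verify they are $(k,\varepsilon)$-equally-spaced for large $n$, and show near-collinearity contradicts the $(1+\alpha)$ bound. The only (cosmetic) difference is your choice of $t_i=\tfrac{i-1}{k-1}$ spanning all of $[0,1]$ versus the paper's $\tfrac{i}{k}$, and your explicit thresholds on $n$ where the paper just says ``for $n$ sufficiently large.''
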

\begin{proof}
    Suppose $(\X,\sfd)$ is line-fitting. Choose $n$ big enough and take a metric $\sfd_n$ on $\X \sqcup [0,1]$ that restricts to the Euclidean metric on $[0,1]$, restricts to $c_n\sfd$ on $\X$ and such that $[0,1]$ is contained in the $\frac{1}{n}$-neighbourhood of $\X$. Consider the points $\frac{i}{k} \in [0,1]$ for $i =1,\dots,k$. For every $i$, let $x_i \in \X$ be a point such that $\sfd_n(x_i,\frac{i}{k}) < \frac{1}{n}$. This implies that $\frac{1}{k} - \frac{2}{n} \le \sfd_n(x_i,x_{i+1}) \le \frac{1}{k} + \frac{2}{n}$ for every $i$. Hence, 
    $$\frac{1}{c_n}\left( \frac{1}{k} - \frac{2}{n} \right) \le \sfd(x_i,x_{i+1}) \le \frac{1}{c_n}\left( \frac{1}{k} + \frac{2}{n} \right)$$
    for every $i$. Therefore, 
    $$\frac{\max\{\sfd(x_i,x_{i+1}):\, i=1,\dots,k-1\}}{\min\{\sfd(x_i,x_{i+1}):\, i=1,\dots,k-1\}} \le \frac{\frac{1}{k} + \frac{2}{n}}{\frac{1}{k} - \frac{2}{n}} \le 1+\varepsilon$$
    for $n$ sufficiently large. On the other hand, $c_n\sfd(x_1,x_k) = \sfd_n(x_1,x_k) \ge 1 - \frac{1}{k}-\frac{2}{n}$. Hence,
    $$\frac{\sum_{i=1}^{k-1}\sfd(x_i,x_{i+1})}{\sfd(x_1,x_k)} \le \frac{\sum_{i=1}^{k-1}\left( \frac{1}{k} + \frac{2}{n} \right)}{1- \frac{2}{n}} = \frac{1-\frac{1}{k}+\frac{2(k-1)}{n}}{1-\frac{1}{k} - \frac{2}{n}} \le 1 + \alpha$$
    for $n$ sufficiently large. This contradicts the SRA$_k(\varepsilon,\alpha)$ condition.

\end{proof}

\subsection{Ultralimits}
\label{subsec:ultralimits}

For the proofs of the statements of this section, we refer to \cite{KapovichDrutu2018, CavallucciSambusetti2022, Cavallucci2023}.
A non-principal ultrafilter on $\N$ is a non-trivial finitely-additive measure $\omega$ taking values in $\{0,1\}$ and such that $\omega(A) = 0$ for every finite set $A\subseteq \N$. Non-principal ultrafilters allow to take a well-defined limit of a bounded sequence of real numbers. Given a bounded sequence $\{a_n\}_{n\in \N} \subseteq \R$ and a non-principal ultrafilter $\omega$, then there exists a unique real number $a_\omega$ such that
$$\omega(\{ n \in \N\,:\, \vert a_n - a_\omega \vert < \varepsilon\}) = 1$$
for every $\varepsilon > 0$. The value $a_\omega$ is denoted by $\olim a_n$ and is called the \emph{$\omega$-limit of the sequence $\{a_n\}_{n\in \N}$}. If the sequence is not bounded then only one of the following two possibilities occurs: either $\omega(\{n\in\N \,:\, a_n > M\}) = 1$ for every $M \in \N$ or $\omega(\{n\in\N \,:\, a_n < -M\}) = 1$ for every $M \in \N$. In the first case, we write $\olim a_n = +\infty$ and in the second case $\olim a_n = -\infty$.

The $\omega$-limit satisfies natural properties:
\begin{equation}
\label{eq:ultralimits_algebraic_properties}
    \begin{aligned}
        \olim (a_n + b_n) &= \olim a_n + \olim b_n,\\ \olim (a_n b_n) &= (\olim a_n )(\olim b_n) \\
        \olim f(a_n) &= f(\olim a_n)\\
        \olim a_n &\le \olim b_n,\text{ provided }a_n \le b_n\text{ for }\omega\text{-a.e. }n \in \N
    \end{aligned}
\end{equation}
for every bounded sequences $\{a_n\}_{n\in\N}, \{b_n\}_{n\in\N}$ of real numbers and every continuous function $f\colon \R \to \R$.

A pointed metric space is a triple $(\X,\sfd,x)$, where $(\X,\sfd)$ is a metric space and $x \in \X$. Two pointed metric spaces $(\X,\sfd,x),(\Y,\sfd',y)$ are isometric if there exists a surjective isometry $f \colon \X \to \Y$ such that $f(x)=y$. In this case we write $(\X,\sfd,x) \cong (\Y,\sfd',y)$.

Let $\{(\X_n,\sfd_n,x_n)\}_{n\in \N}$ be a sequence of pointed metric spaces and let $\omega$ be a non-principal ultrafilter. A sequence $\{y_n\}_{n\in \N}$, with $y_n \in \X_n$ for every $n$, is $\omega$-admissible if $\olim \sfd_n(x_n,y_n) < \infty$. Given two $\omega$-admissible sequences $\{y_n\}_{n\in \N}$, $\{z_n\}_{n\in \N}$ we define $\hat{\sfd}_\omega(\{y_n\}_{n\in \N}, \{z_n\}_{n\in \N}) := \olim \sfd_n(y_n,z_n)$. By definition, the $\omega$-limit of the sequence $\{(\X_n,\sfd_n,x_n)\}_{n\in \N}$ is the pointed metric space $(\X_\omega, \sfd_\omega, x_\omega)$, where $\X_\omega$ is the set of equivalence classes of $\omega$-admissible sequences under the equivalence relation $\hat{\sfd}_\omega = 0$, $\sfd_\omega$ is the metric induced by $\hat{\sfd}_\omega$ on $\X_\omega$ and $x_\omega$ is the class of the $\omega$-admissible sequence $\{x_n\}_{n\in \N}$. More generally, given an $\omega$-admissible sequence $\{y_n\}_{n\in \N}$, we denote its class in $\X_\omega$ by $y_\omega$, or by $\olim y_n$. We also write $(\X_\omega,\sfd_\omega, x_\omega) =: \olim (X_n,\sfd_n,x_n)$.

Let $\{(\X_n,\sfd_n,x_n)\}_{n\in \N}$, $\{(\Y_n,\sfd_n',y_n)\}_{n\in \N}$ be two sequences of pointed metric spaces and let $\omega$ be a non-principal ultrafilter. A sequence of maps $f_n \colon \X_n \to \Y_n$ is said to be $\omega$-admissible if $\olim \sfd_n'(f_n(x_n),y_n) < \infty$. If the maps $f_n$ are uniformly $L$-Lipschitz then there is a well-defined map 
$$f_\omega \colon (\X_\omega, \sfd_\omega, x_\omega) \to (\Y_\omega, \sfd_\omega', y_\omega),\,\quad \olim z_n \mapsto \olim f_n(z_n)$$
which is again $L$-Lipschitz. The map $f_\omega$ is called the $\omega$-limit map of the sequence $\{f_n\}_{n\in \N}$.

\begin{lemma}
\label{lemma:properties_of_ultralimits}
Let $\omega$ be a non-principal ultrafilter.
\begin{itemize}
    \item[(i)] Let $\{(\X_n,\sfd_n,x_n)\}_{n\in\N}$ and $\{(\Y_n,\sfd'_n,y_n)\}_{n\in\N}$ be two sequences of pointed metric spaces and let $f_n \colon \X_n \to \Y_n$ be a sequence of $\omega$-admissible uniformly surjective $L$-biLipschitz maps. Then the $\omega$-limit map $f_\omega$ is $L$-biLipschitz and surjective.
    \item[(ii)] Let $\{(\X_n,\sfd_n,x_n)\}_{n\in\N}$ be a sequence of pointed metric spaces and let $\alpha > 0$. Then $\olim (\X_n, \sfd_n^{\alpha}, x_n)$ is isometric as pointed metric space to $(\X_\omega, \sfd_\omega^{\alpha}, x_\omega)$.
\end{itemize}
\end{lemma}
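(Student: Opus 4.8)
The plan is to read both statements directly off the explicit description of the ultralimit recalled above: (i) reduces to pushing pointwise (bi)Lipschitz inequalities through $\omega$-limits, while (ii) reduces to the elementary fact that $s\mapsto s^\alpha$ interacts well with $\omega$-limits.

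For \textbf{(i)}, I would first note that an $L$-biLipschitz map is in particular $L$-Lipschitz, and that the $\omega$-admissibility hypothesis $\olim\sfd_n'(f_n(x_n),y_n)<\infty$ is exactly what is needed for the discussion preceding the lemma to apply: the map $f_\omega\colon(\X_\omega,\sfd_\omega,x_\omega)\to(\Y_\omega,\sfd_\omega',y_\omega)$, $\olim z_n\mapsto\olim f_n(z_n)$, is well defined and $L$-Lipschitz. It then remains to prove the lower bound and surjectivity. For the lower bound, take $\omega$-admissible sequences $\{z_n\},\{w_n\}$; since $\sfd_n(z_n,w_n)\le\sfd_n(z_n,x_n)+\sfd_n(x_n,w_n)$, the sequence $\{\sfd_n(z_n,w_n)\}$ is $\omega$-a.e.\ bounded, and hence so is $\{\sfd_n'(f_n(z_n),f_n(w_n))\}$; applying $\olim$ to the pointwise inequality $\sfd_n'(f_n(z_n),f_n(w_n))\ge L^{-1}\sfd_n(z_n,w_n)$ and using the monotonicity in \eqref{eq:ultralimits_algebraic_properties} together with the definition of $\sfd_\omega,\sfd_\omega'$ as $\omega$-limits gives $\sfd_\omega'(f_\omega(z_\omega),f_\omega(w_\omega))\ge L^{-1}\sfd_\omega(z_\omega,w_\omega)$.

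For surjectivity, given $\eta_\omega=\olim\eta_n\in\Y_\omega$, I would use surjectivity of each $f_n$ to pick $\zeta_n\in\X_n$ with $f_n(\zeta_n)=\eta_n$, and then check that $\{\zeta_n\}$ is $\omega$-admissible: from the lower biLipschitz bound, $\sfd_n(\zeta_n,x_n)\le L\,\sfd_n'(f_n(\zeta_n),f_n(x_n))\le L\big(\sfd_n'(\eta_n,y_n)+\sfd_n'(y_n,f_n(x_n))\big)$, and both summands have finite $\omega$-limit, the first because $\eta_\omega$ is a point of $\Y_\omega$ and the second by $\omega$-admissibility of $\{f_n\}$. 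Hence $\zeta_\omega:=\olim\zeta_n\in\X_\omega$ and $f_\omega(\zeta_\omega)=\olim f_n(\zeta_n)=\eta_\omega$. I expect this lifting step to be the only point requiring care: it is precisely here that the two-sided (rather than merely Lipschitz) bound enters, keeping the chosen preimages from escaping to infinity along $\omega$.

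For \textbf{(ii)}, the isometry will simply be the identity on equivalence classes of sequences. I would first observe that, since $s\mapsto s^\alpha$ is continuous, strictly increasing and unbounded on $[0,\infty)$, a sequence $\{y_n\}$ with $y_n\in\X_n$ satisfies $\olim\sfd_n(x_n,y_n)<\infty$ if and only if $\olim\sfd_n^\alpha(x_n,y_n)<\infty$; thus the $\omega$-admissible sequences for $(\X_n,\sfd_n,x_n)$ and for $(\X_n,\sfd_n^\alpha,x_n)$ coincide. Next, for two such admissible sequences $\{y_n\},\{z_n\}$ the distances $\sfd_n(y_n,z_n)$ are $\omega$-a.e.\ bounded, so \eqref{eq:ultralimits_algebraic_properties} applied to $f(s)=|s|^\alpha$ yields $\olim\sfd_n^\alpha(y_n,z_n)=\big(\olim\sfd_n(y_n,z_n)\big)^\alpha=\sfd_\omega(y_\omega,z_\omega)^\alpha$; in particular the null relations $\hat\sfd_\omega=0$ agree, so the two quotient sets and their basepoints $\olim x_n$ literally coincide, and the displayed identity shows that the metric of $\olim(\X_n,\sfd_n^\alpha,x_n)$ is exactly $\sfd_\omega^\alpha$. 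Therefore the identity map is a pointed isometry onto $(\X_\omega,\sfd_\omega^\alpha,x_\omega)$. No serious obstacle is expected here; the only thing to watch is that \eqref{eq:ultralimits_algebraic_properties} is invoked only for $\omega$-a.e.\ bounded sequences, which $\omega$-admissibility guarantees.
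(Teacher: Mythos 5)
Your proposal is correct and follows essentially the same route as the paper: part (ii) is the identical identity-map argument via \eqref{eq:ultralimits_algebraic_properties}, and part (i) is the argument the paper only sketches. Your write-up is in fact more complete on the surjectivity step, correctly isolating that the lower biLipschitz bound is what keeps the lifted preimages $\omega$-admissible.
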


\begin{proof}
    The biLipschitz property of $f_\omega$ can be obtained arguing as we did before the lemma. The surjectivity follows by the definitions and the combination of the fact that $f_n$'s are uniformly $L$-Lipschitz and surjective.
    
    We prove the second statement. A sequence $\{y_n\}_{n\in \N}$ is $\omega$-admissible for $\{(\X_n,\sfd_n^\alpha,x_n)\}_{n\in\N}$, i.e. $\olim \sfd_n^\alpha(x_n,y_n) < \infty$, if and only if $\olim \sfd_n(x_n,y_n) < \infty$, i.e. it is $\omega$-admissible for $\{(\X_n,\sfd_n,x_n)\}_{n\in\N}$. Furthermore, two $\omega$-admissible sequences $\{y_n\}_{n\in \N}$, $\{z_n\}_{n\in \N}$ satisfy $\olim \sfd_n^\alpha(y_n,z_n) = 0$ if and only if $\olim \sfd_n(y_n,z_n) = 0$. This means that there is a well-defined bijection 
    $$g\colon \olim (\X_n, \sfd_n^{\alpha}, x_n) \to \olim (\X_n, \sfd_n, x_n),\quad \olim y_n \mapsto \olim y_n$$
    that sends $\olim x_n$ to $\olim x_n$.
    We denote $\olim (\X_n, \sfd_n, x_n) =: (\X_\omega, \sfd_\omega, x_\omega)$ and we consider the $\alpha$-snowflaked metric $\sfd_\omega^\alpha$. We claim that $g \colon \olim (X_n,\sfd_n^\alpha,x_n) \to (\X_\omega, \sfd_\omega^\alpha, x_\omega)$ is an isometry. Let $\{y_n\}_{n\in\N}$, $\{z_n\}_{n\in\N}$ be two admissible sequences. Then, by \eqref{eq:ultralimits_algebraic_properties} we get that
    $$\olim \sfd_n^\alpha(y_n,z_n) = (\olim \sfd_n(y_n,z_n))^\alpha = \sfd_\omega(y_\omega, z_\omega)^\alpha.$$
    This concludes the proof.
\end{proof}

\begin{definition}[$\omega$-limit of $(\X,\sfd)$]
Let $(\X,\sfd)$ be a metric space and let $\omega$ be a non-principal ultrafilter. An \emph{$\omega$-limit of $(\X,\sfd)$} is the $\omega$-limit of a sequence $\{(\X,r_n\sfd,x_n)\}_{n\in \N}$, where $\{x_n\}_{n\in\N}$ is a sequence of points of $\X$ and $\{r_n\}_{n\in \N}$ is a sequence of positive real numbers.
\end{definition}

When the sequence of basepoints $\{x_n\}_{n\in \N}$ is constant, the $\omega$-limits take special names. For instance, if $\olim r_n = 0$, the corresponding $\omega$-limit is called an asymptotic cone. An asymptotic cone is either proper or not separable by \cite{Sisto2012}. If $\olim r_n = +\infty$, the corresponding $\omega$-limit is called a tangent cone. On the other hand, allowing non-constant but bounded sequences $\{x_n\}_{n\in \N}$, if $\olim r_n = r \in (0,+\infty)$ then the corresponding $\omega$-limit contains an isometrically embedded copy of $(\X,r\sfd)$. In this latter case, if moreover $(\X,\sfd)$ is proper then the corresponding $\omega$-limit is isometric to $(\X,r\sfd)$. If $(\X,\sfd)$ is proper, the set of proper $\omega$-limits coincides exactly with the set of all possible Gromov-Hausdorff limits, see \cite[Section 3]{Cavallucci2023}. 
Within the proof of the next result, we provide a generalization of this fact by showing that every compact subset of an $\omega$-limit is a Gromov-Hausdorff limit.

\begin{proposition}
\label{prop:compact_subsets_different_ultralimits}
    Let $(\X,\sfd)$ be a metric space and let $\omega,\omega'$ be non-principal ultrafilters and let $(\X_\omega, \sfd_\omega, x_\omega)$ be an $\omega$-limit of $(\X,\sfd)$. For every compact subset $\Y \subseteq \X_\omega$, there exists an $\omega'$-limit $(\X_{\omega'},\sfd_{\omega'}, x_\omega')$ of $(\X,\sfd)$ and a compact subset $\Y' \subseteq \X_{\omega'}$ which is isometric to $\Y$.
\end{proposition}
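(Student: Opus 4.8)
The plan is to use the compactness of $\Y$ to replace it by a countable dense subset, fix once and for all a representative sequence for each of these points together with the basepoint, and then run a diagonal extraction that produces a sequence of rescalings of $\X$ whose $\omega'$-limit contains an isometric copy of $\Y$. No special relation between $\omega$ and $\omega'$ will be needed: the extracted approximations will converge in the ordinary sense, hence pass to the $\omega'$-limit for any prescribed non-principal ultrafilter.

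I would first fix the data. Write $(\X_\omega,\sfd_\omega,x_\omega)=\olim(\X,r_n\sfd,x_n)$. Choose a countable dense subset $\{z^1_\omega,z^2_\omega,\ldots\}$ of the compact set $\Y$, set $z^0_\omega:=x_\omega$, and for each $i\ge 0$ fix an $\omega$-admissible representative sequence $\{z^i_n\}_{n\in\N}$ in $\X$ (with $z^0_n:=x_n$), so that $\sfd_\omega(z^i_\omega,z^j_\omega)=\olim r_n\sfd(z^i_n,z^j_n)<\infty$ for all $i,j$. By the defining property of the $\omega$-limit of a bounded real sequence, for every $m\in\N$ the set
\[
A_m:=\bigcap_{0\le i,j\le m}\big\{n\in\N:\ |r_n\sfd(z^i_n,z^j_n)-\sfd_\omega(z^i_\omega,z^j_\omega)|<\tfrac1m\big\}
\]
is a finite intersection of sets of full $\omega$-measure, hence $\omega(A_m)=1$ and in particular $A_m\neq\emptyset$; pick $n(m)\in A_m$.

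Next I would build the $\omega'$-limit. Set $s_m:=r_{n(m)}$, $p_m:=x_{n(m)}$, and for each $i\ge 1$ let $w^i_m:=z^i_{n(m)}$ when $m\ge i$ and $w^i_m:=p_m$ otherwise, and consider $(\X_{\omega'},\sfd_{\omega'},x_{\omega'}):=\olim(\X,s_m\sfd,p_m)$, which is an $\omega'$-limit of $(\X,\sfd)$. For fixed $i,j\ge 1$ and every $m\ge\max\{i,j\}$, the choice $n(m)\in A_m$ gives
\[
|s_m\sfd(w^i_m,p_m)-\sfd_\omega(z^i_\omega,x_\omega)|<\tfrac1m,\qquad |s_m\sfd(w^i_m,w^j_m)-\sfd_\omega(z^i_\omega,z^j_\omega)|<\tfrac1m.
\]
The first estimate shows the ordinary sequence $m\mapsto s_m\sfd(w^i_m,p_m)$ converges to the finite number $\sfd_\omega(z^i_\omega,x_\omega)$, so $\{w^i_m\}_m$ is $\omega'$-admissible; write $w^i_{\omega'}$ for its class. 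The second estimate shows $m\mapsto s_m\sfd(w^i_m,w^j_m)$ converges to $\sfd_\omega(z^i_\omega,z^j_\omega)$, hence $\sfd_{\omega'}(w^i_{\omega'},w^j_{\omega'})=\sfd_\omega(z^i_\omega,z^j_\omega)$ for all $i,j\ge 1$. Thus $z^i_\omega\mapsto w^i_{\omega'}$ is a surjective isometry from the dense subset $\{z^i_\omega:i\ge 1\}$ of $\Y$ onto $D':=\{w^i_{\omega'}:i\ge 1\}\subseteq\X_{\omega'}$.

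Finally, since $\Y$ is compact it is complete, and since ultralimits are complete metric spaces the closure $\Y':=\overline{D'}$ taken in $\X_{\omega'}$ is complete with $D'$ dense in it. An isometry mapping a dense subset of a complete space onto a dense subset of another complete space extends to a surjective isometry of the two, so the map above extends to a surjective isometry $\Y\to\Y'$. Being isometric to the compact set $\Y$, the closed subset $\Y'$ is a compact subset of $\X_{\omega'}$, as required. I expect the only delicate point to be the diagonal extraction: one must impose the approximations for all pairs $i,j\le m$ simultaneously with tolerance $1/m$, so that they become ordinary limits in $m$ and therefore pass to the prescribed, arbitrary ultrafilter $\omega'$; $\omega'$-admissibility and the extension-of-isometries step are then routine.
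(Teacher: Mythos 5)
Your proof is correct. It rests on the same underlying mechanism as the paper's: a diagonal extraction of indices $n(m)$ along which the $\omega$-limit distances become ordinary limits, which then pass to the prescribed ultrafilter $\omega'$ because genuine limits agree with $\omega'$-limits. The implementations differ, though. You discretize $\Y$ by a countable dense subset, fix representatives once and for all, and verify the isometry directly at the level of $\omega'$-admissible sequences, finishing with the standard extension of an isometry from dense subsets of complete spaces; this makes the argument entirely self-contained. The paper instead discretizes $\Y$ by finite maximal $\frac1j$-separated nets $\Y^j_\omega$, identifies each with an $\omega$-limit of finite subsets of $(\X,r_n\sfd)$ via a pigeonhole argument, and then routes the diagonal step through Gromov--Hausdorff convergence, invoking \cite[Propositions 3.11 and 3.13]{Cavallucci2023}. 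What the paper's detour buys is the intermediate statement that every compact subset of an $\omega$-limit arises as a Gromov--Hausdorff limit of finite subsets, which is announced in the introduction and reused in Remark \ref{rem:GH_to_ultralimits}; what your route buys is independence from those external propositions. One cosmetic point: when you write $\olim(\X,s_m\sfd,p_m)$ for the new limit you should make explicit that the limit is taken with respect to $\omega'$, since the paper's notation $\olim$ does not record which ultrafilter is being used.
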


\begin{proof}
    We write explicitly $(\X_\omega, \sfd_\omega, x_\omega) = \olim(\X, r_n\sfd, x_n)$. Since $\Y$ is compact, for every $j \in \N$ there exists a finite $\frac{1}{j}$-separated set $\Y^j_\omega := \{ y^{i,j}_\omega \}_{1\le i \le N_j} \subseteq \Y$ that is maximal. Each point $y_\omega^{i,j}$ is the $\omega$-limit of an $\omega$-admissible sequence $\{y^{i,j}_n\}_{n\in \N}$. We define $\Y^j_n:= \{y^{i,j}_n\}_{1\le i \le N_j} \subseteq (\X,r_n\sfd)$. We check that $\olim (\Y_n^j, r_n\sfd, y_n^{1,j}) \cong (\Y_\omega^j, \sfd_\omega, y_\omega^{1,j})$. Every $\Y_n^j$ has cardinality at most $N_j$. For every $\omega$-admissible sequence $\{z_n\}_{n\in\N}$ with $z_n \in \Y_n^j$ for all $n$ we define the sets $A_i := \{n\in \N\,:\, z_n = y_n^{i,j}\}$ for $i\in\{1,\ldots,N_j\}$. These are a finite number of sets, because $i\in \{1,\ldots, N_j\}$ that are disjoint and whose union is $\N$. Therefore, only one of them has measure $1$, say  $A_{i_0}$. This implies that $\olim z_n = \olim y_{n}^{i_0,j} = y_\omega^{i_0,j} \in \Y_\omega^j$. This shows that $\olim (\Y_n^j, r_n\sfd, y_n^{1,j}) \cong (\Y_\omega^j, \sfd_\omega, y_\omega^{1,j})$. The other inclusion is trivial by construction. 

    By \cite[Proposition 3.11]{Cavallucci2023}, there exists a subsequence $\{n_k\}_{k\in \N}$ such that $(\Y^j_{n_k}, r_{n_k}\sfd, y_{n_k}^{1,j})$ converges in the Gromov-Hausdorff sense to $(\Y^j_\omega, \sfd_\omega, y_\omega^{1,j})$. This is true for every $j\in \N$. Since the Gromov-Hausdorff convergence is metrizable, by a diagonal argument we find a subsequence $(\Z_k, r_{n_k}\sfd, y_{n_k}^{1,j_k}) := (\Y^{j_k}_{n_k}, r_{n_k}\sfd, y_{n_k}^{1,j_k})$ such that $(Z_k,r_{n_k}\sfd,y_{n_k}^{1,j_k})$ converges in the Gromov-Hausdorff to $(\Y, \sfd_\omega, y_\omega)$ for some $y_\omega \in \Y$. Applying \cite[Proposition 3.13]{Cavallucci2023}, we get that $\omega'\textup{-}\lim (Z_k, r_{n_k}\sfd, y_{n_k}^{1,j_k})$ is isometric to $\Y$. Now, $\omega'\textup{-}\lim (Z_k, r_{n_k}\sfd, y_{n_k}^{1,j_k})$ is a subset of $\omega'\textup{-}\lim(\X, r_{n_k}\sfd, x_{n_k}) $, which is an $\omega'$-limit of $(\X,\sfd)$. This concludes the proof.
\end{proof}

\begin{remark}
\label{rem:GH_to_ultralimits}
    The same proof shows two more general statements.
    \begin{itemize}
        \item[1)] Let $\{(\X_n,\sfd_n,x_n)\}_{n\in \N}$ be a sequence of pointed metric spaces and let $\omega,\omega'$ be non-principal ultrafilters. Let $(\X_\omega, \sfd_\omega, x_\omega)$ be the $\omega$-limit of the sequence. For every subset $\Y \subseteq \X_\omega$ which is proper as metric space, there exists a subsequence $\{n_j\}_{j\in \N}$ such that the $\omega'$-limit of the sequence $\{(\X_{n_j},\sfd_{n_j},x_{n_j})\}_{j\in \N}$ contains a subset $\Y'$ which is isometric to $\Y$.
        \item[2)] Let $\{(\X_n,\sfd_n,x_n)\}_{n\in \N}$ be a sequence of pointed metric spaces  which admits a pointed Gromov-Hausdorff limit $(\X,\sfd,x)$ that is proper and let $\omega$ be a non-principal ultrafilter. Then $(\X,\sfd,x)$ is the $\omega$-limit of a pointed sequence $(\Y_n,\sfd_n,x_n)$, where $\Y_n\subseteq \X_n$. In particular, $(\X,\sfd,x)$ isometrically embeds into $(\X_\omega, \sfd_\omega, x_\omega)$.
    \end{itemize} 
\end{remark}

\subsection{Non-standard analysis}
We briefly recall some concepts of non-standard analysis. We will need them only in the special case of real-valued functions. References for this section can be found in \cite{ConleyKechrisTucker-Drob2013, Sayag2022}.

Let $\omega$ be a non-principal ultrafilter. We define $\hat{\R}_\omega := \prod_{n \in \N} \R / \sim_\omega$, where $\{a_n\}_{n\in \N} \sim_\omega \{b_n\}_{n\in \N}$ if and only if $\omega(\{n\in\N\,:\, a_n = b_n\}) = 1$. The equivalence class of a sequence $\{a_n\}_{n\in \N}$ is denoted by $[a_n]$. Given a sequence of subsets $A_n \subseteq \R$, we define 
$$\hat{A}_\omega := \{[a_n]\,:\, \omega(\{n\in \N\,:\, a_n \in A_n\}) = 1\} \subseteq \hat{\R}_\omega.$$
The set $\mathcal{A} := \{\hat{A}_\omega\,:\, A_n \text{ is Borel for every $n\in \N$}\}$ is an algebra of sets of $\hat{\R}_\omega$. The function
$$\hat{\mathcal{L}}^1(\hat{A}_\omega) := \olim \mathcal{L}^1(A_n)$$
defines a finitely-additive measure on $\mathcal{A}$. It can be extended to a unique measure, which we still denote by $\hat{\mathcal{L}}^1$, on the $\sigma$-algebra generated by $\mathcal{A}$, that we call $\Sigma$. 

\begin{proposition}[{Dominated convergence Theorem, \cite[Theorem 2.2.3]{Sayag2022}}]
\label{prop:dominated_convergence}
    Let $I \subseteq \R$ be a compact interval. Let $f_n\colon I \to \R$ be a sequence of $\mathcal{L}^1$-measurable equibounded functions. Define $\hat{f}_\omega \colon \hat{I}_\omega \to \R$ by $\hat{f}_\omega([a_n]) := \olim f_n(a_n)$. Then $\hat{f}_\omega$ is $\Sigma$-measurable, bounded and 
    $$\int_{\hat{I}_\omega} \hat{f}_\omega\,\d\hat{\mathcal{L}^1} = \olim \int_If_n\,\d\mathcal{L}^1.$$
\end{proposition}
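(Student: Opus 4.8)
The plan is to reduce the statement to the case of simple functions by a uniform approximation argument, using that the $f_n$ are equibounded, say $|f_n|\le M$ for all $n$. First I would record that $\hat{f}_\omega$ is well defined and bounded by $M$: if $[a_n]=[b_n]$ then $a_n=b_n$ for $\omega$-a.e.\ $n$, so $f_n(a_n)=f_n(b_n)$ for $\omega$-a.e.\ $n$ and hence $\olim f_n(a_n)=\olim f_n(b_n)$; and $|f_n(a_n)|\le M$ for all $n$ makes the $\omega$-limit finite. It is also harmless to assume each $f_n$ is Borel measurable: replacing $f_n$ by a Borel function agreeing with it $\mathcal{L}^1$-a.e.\ leaves $\int_I f_n\,\d\mathcal{L}^1$ unchanged, and changes $\hat{f}_\omega$ only on the set of classes $[a_n]$ with $a_n$ in the exceptional null set for $\omega$-a.e.\ $n$; enlarging those null sets to Borel null sets $\tilde N_n$ and passing to $\hat{(\cdot)}_\omega$, this set is contained in $\hat{\tilde N}_\omega$, which has $\hat{\mathcal{L}}^1$-measure $\olim \mathcal{L}^1(\tilde N_n)=0$, hence is $\hat{\mathcal{L}}^1$-null by completeness.

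The core computation is for indicator functions. If $f_n=\mathbf{1}_{A_n}$ with $A_n\subseteq I$ Borel, then $\olim\mathbf{1}_{A_n}(a_n)$ equals $1$ exactly when $\omega(\{n:a_n\in A_n\})=1$ and equals $0$ otherwise, i.e.\ $\hat{f}_\omega=\mathbf{1}_{\hat{A}_\omega}$; this is $\Sigma$-measurable since $\hat{A}_\omega\in\mathcal{A}\subseteq\Sigma$, and $\int_{\hat{I}_\omega}\hat{f}_\omega\,\d\hat{\mathcal{L}}^1=\hat{\mathcal{L}}^1(\hat{A}_\omega)=\olim\mathcal{L}^1(A_n)=\olim\int_I f_n\,\d\mathcal{L}^1$ straight from the definition of $\hat{\mathcal{L}}^1$ (using $\hat{A}_\omega\subseteq\hat{I}_\omega$ since $A_n\subseteq I$). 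By the additivity and homogeneity of the $\omega$-limit in \eqref{eq:ultralimits_algebraic_properties}, valid for \emph{finite} sums of bounded sequences, the same conclusion holds for $s_n=\sum_{j=1}^{N}c_j\mathbf{1}_{A_{j,n}}$ whenever the index set $\{1,\dots,N\}$ and the coefficients $c_j$ are independent of $n$ and the $A_{j,n}\subseteq I$ are Borel: one gets $\hat{s}_\omega=\sum_j c_j\mathbf{1}_{\hat{A}_{j,\omega}}$, which is $\Sigma$-measurable, and $\int_{\hat{I}_\omega}\hat{s}_\omega\,\d\hat{\mathcal{L}}^1=\olim\int_I s_n\,\d\mathcal{L}^1$ (here also using $\mathcal{L}^1(A_{j,n})\le\mathcal{L}^1(I)<\infty$, so all sequences involved are bounded and $\olim$ commutes with the finite sum).

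Now I would approximate uniformly. For $m\in\N$ cut $[-M,M]$ into the intervals $J^m_j:=[jM/m,(j+1)M/m)$, $-m\le j\le m-1$, and set $A^m_{j,n}:=f_n^{-1}(J^m_j)$ (Borel, since $f_n$ is Borel) and $s^m_n:=\sum_j \tfrac{jM}{m}\mathbf{1}_{A^m_{j,n}}$. Then $\|f_n-s^m_n\|_{\infty}\le M/m$ for \emph{every} $n$, with an $n$-independent index set and coefficients, so the previous paragraph applies to each $s^m_n$. Since $|\olim(f_n(a_n)-s^m_n(a_n))|\le\olim|f_n(a_n)-s^m_n(a_n)|\le M/m$, we get $\|\hat{f}_\omega-\hat{s}^m_\omega\|_\infty\le M/m$, so $\hat{f}_\omega$ is a uniform limit of $\Sigma$-measurable functions and hence $\Sigma$-measurable and bounded. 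Finally, $\hat{\mathcal{L}}^1(\hat{I}_\omega)=\olim\mathcal{L}^1(I)=\mathcal{L}^1(I)<\infty$, so $\bigl|\int_{\hat{I}_\omega}\hat{f}_\omega\,\d\hat{\mathcal{L}}^1-\int_{\hat{I}_\omega}\hat{s}^m_\omega\,\d\hat{\mathcal{L}}^1\bigr|\le\tfrac{M}{m}\mathcal{L}^1(I)$ and likewise $\bigl|\olim\int_I f_n\,\d\mathcal{L}^1-\olim\int_I s^m_n\,\d\mathcal{L}^1\bigr|\le\tfrac{M}{m}\mathcal{L}^1(I)$; letting $m\to\infty$ in the identity $\int_{\hat{I}_\omega}\hat{s}^m_\omega\,\d\hat{\mathcal{L}}^1=\olim\int_I s^m_n\,\d\mathcal{L}^1$ yields the claimed formula.

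The main obstacle is organizational rather than conceptual: the only available tools are the elementary algebraic properties \eqref{eq:ultralimits_algebraic_properties} of the $\omega$-limit and the definition of $\hat{\mathcal{L}}^1$ on the algebra $\mathcal{A}$, and these interact well only with \emph{finite} linear combinations whose index set and coefficients do not depend on $n$. The whole argument therefore hinges on choosing the simple approximants $s^m_n$ with the \emph{same} partition $\{J^m_j\}$ of $[-M,M]$ for all $n$, which is exactly what simultaneously provides the uniform bound $\|f_n-s^m_n\|_\infty\le M/m$ and lets one commute $\olim$ with the sum. Secondary care is needed for the reduction from $\mathcal{L}^1$-measurable to Borel $f_n$ and for invoking completeness of $\hat{\mathcal{L}}^1$ in the first step; both are routine given the construction of $\hat{\mathcal{L}}^1$ recalled in the setup.
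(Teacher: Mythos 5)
The paper does not actually prove this proposition: it is quoted as \cite[Theorem 2.2.3]{Sayag2022}, so there is no internal argument to compare against. Your write-up is a correct, self-contained proof, and it follows the natural route: the identity is immediate for indicators of Borel sets directly from the definition of $\hat{\mathcal{L}}^1$ on the algebra $\mathcal{A}$ (an $\omega$-limit of a $\{0,1\}$-valued sequence is $0$ or $1$, so $\hat{f}_\omega=\mathbf{1}_{\hat{A}_\omega}$), it passes to finite linear combinations with $n$-independent index set and coefficients by linearity of $\olim$ on bounded sequences, and equiboundedness is exactly what allows a single partition of the range $[-M,M]$ to produce simple approximants $s^m_n$ with $\|f_n-s^m_n\|_\infty\le M/m$ uniformly in $n$; the conclusion then follows since $\hat{\mathcal{L}}^1(\hat{I}_\omega)=\mathcal{L}^1(I)<\infty$. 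Two small points to tidy. First, the intervals $J^m_j=[jM/m,(j+1)M/m)$ do not cover the value $M$; close the last interval on the right (or partition $[-M,M+1)$) so that the bound $\|f_n-s^m_n\|_\infty\le M/m$ holds everywhere. Second, in the reduction from Lebesgue-measurable to Borel $f_n$ you invoke ``completeness'' to conclude $\Sigma$-measurability, but the paper's $\Sigma$ is $\sigma(\mathcal{A})$, which need not contain arbitrary subsets of $\hat{\mathcal{L}}^1$-null sets; strictly your argument yields measurability with respect to the completion of $\Sigma$ (equivalently, the Carath\'eodory $\sigma$-algebra of the extension). This affects neither the integral identity nor the paper's only application, where the $f_n$ are Borel step functions, but it is worth stating precisely.
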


\section{Proof of main result}

In this section, we characterize metric spaces that are biLipschitz equivalent to snowflakes.

\begin{theorem}
\label{thm:characterization_snowflakes}
    Let $(\X,\sfd)$ be a metric space and $\omega$ be a non-principal ultrafilter. Then the following are equivalent.
    \begin{itemize}
        \item[(i)] $(\X,\sfd)$  is biLipschitz equivalent to a snowflake.
        \item[(ii)] Every $\omega$-limit of $(\X,\sfd)$ does not contain non-constant rectifiable curves.
        \item[(iii)] Every $\omega$-limit of $(\X,\sfd)$ does not contain non-constant geodesic segments.
        \item[(iv)] There exist $k\in \N$ and $\varepsilon,\alpha > 0$ such that $(\X,\sfd)$ satisfies the \textup{SRA}$_k(\varepsilon,\alpha)$ condition.
        \item[(v)] $(\X,\sfd)$ is not line-fitting.
    \end{itemize}
    In particular, all conditions (i)-(v) are preserved under biLipschitz equivalence.
\end{theorem}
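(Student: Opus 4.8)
The plan is to run the cycle of implications (i) $\Rightarrow$ (ii) $\Rightarrow$ (iii) $\Rightarrow$ (iv) $\Rightarrow$ (v) $\Rightarrow$ (i). Two of the links are already available: (iv) $\Rightarrow$ (v) is Lemma~\ref{lemma:SRA_implies_not_line_fitting}, and (v) $\Rightarrow$ (i) is one direction of Proposition~\ref{prop:snowflake_iff_not_line_fitting}. The implication (ii) $\Rightarrow$ (iii) is immediate, since a non-constant geodesic segment is a non-constant rectifiable curve. So the real content lies in (i) $\Rightarrow$ (ii) and, above all, in (iii) $\Rightarrow$ (iv).

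For (i) $\Rightarrow$ (ii) I would first observe that every $\omega$-limit of a space biLipschitz equivalent to a snowflake is again biLipschitz equivalent to a snowflake. Writing such an $\omega$-limit as $\olim(\X,r_n\sfd,x_n)$ and fixing a surjective $C$-biLipschitz map $g\colon(\X,\sfd)\to(\Y,\rho^\alpha)$, the same map $g$ is surjective $C$-biLipschitz from $(\X,r_n\sfd)$ to $(\Y,r_n\rho^\alpha)=(\Y,(r_n^{1/\alpha}\rho)^\alpha)$; by Lemma~\ref{lemma:properties_of_ultralimits}(i) the $\omega$-limit map $g_\omega$ is then surjective $C$-biLipschitz from $\olim(\X,r_n\sfd,x_n)$ onto $\olim(\Y,(r_n^{1/\alpha}\rho)^\alpha,g(x_n))$, which by Lemma~\ref{lemma:properties_of_ultralimits}(ii) is isometric to $(\Y_\omega,\rho_\omega^\alpha,y_\omega)$ with $(\Y_\omega,\rho_\omega,y_\omega)=\olim(\Y,r_n^{1/\alpha}\rho,g(x_n))$, i.e.\ an $\alpha$-snowflake. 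I would then conclude via Remark~\ref{rem:no_rect_in_snowflake}: a snowflake carries no non-constant rectifiable curve, so pushing such a curve of the $\omega$-limit forward by the Lipschitz injection $g_\omega$ would produce one in a snowflake, a contradiction.

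The heart of the matter is (iii) $\Rightarrow$ (iv), which I would prove by contraposition: assuming $(\X,\sfd)$ fails \textup{SRA}$_k(\varepsilon,\alpha)$ for every $k\in\N$ and all $\varepsilon,\alpha>0$, I build a non-constant geodesic segment inside some $\omega$-limit. For each $k$, the failure with $\varepsilon=\alpha=1/k$ furnishes points $x_1^{(k)},\dots,x_k^{(k)}\in\X$ whose consecutive distances lie within a factor $1+1/k$ of one another and with $\sum_{i=1}^{k-1}\sfd(x_i^{(k)},x_{i+1}^{(k)})<(1+1/k)\,\sfd(x_1^{(k)},x_k^{(k)})$; in particular $L_k:=\sfd(x_1^{(k)},x_k^{(k)})>0$. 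Rescaling by $r_k:=1/L_k$ and setting $d_i^{(k)}:=r_k\sfd(x_i^{(k)},x_{i+1}^{(k)})$ and $\ell_j^{(k)}:=\sum_{i<j}d_i^{(k)}$ (the arc-position of $x_j^{(k)}$), one has $\ell_k^{(k)}=\sum_i d_i^{(k)}\in[1,1+1/k)$ and $\delta_k:=\max_i d_i^{(k)}\le(1+1/k)^2/(k-1)\to0$. In the $\omega$-limit $(\X_\omega,\sfd_\omega,x_\omega):=\olim(\X,r_k\sfd,x_1^{(k)})$, define for $t\in[0,1]$ the point $\gamma(t):=\olim x_{\phi_k(t)}^{(k)}$, where $\phi_k(t)$ is an index $j$ minimizing $|\ell_j^{(k)}-t|$, so that $|\ell_{\phi_k(t)}^{(k)}-t|\le\delta_k/2$ and the sequences are $\omega$-admissible since $r_k\sfd(x_1^{(k)},x_{\phi_k(t)}^{(k)})\le\ell_k^{(k)}<2$. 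Using $\sfd(x_a^{(k)},x_b^{(k)})\le|\ell_a^{(k)}-\ell_b^{(k)}|$ (the triangle inequality along the chain) for the upper bound, and the triangle inequality $1=r_k\sfd(x_1^{(k)},x_k^{(k)})\le r_k\sfd(x_1^{(k)},x_{\phi_k(s)}^{(k)})+r_k\sfd(x_{\phi_k(s)}^{(k)},x_{\phi_k(t)}^{(k)})+r_k\sfd(x_{\phi_k(t)}^{(k)},x_k^{(k)})$ for the lower bound, one gets $|s-t|-\delta_k-1/k\le r_k\sfd(x_{\phi_k(s)}^{(k)},x_{\phi_k(t)}^{(k)})\le|s-t|+\delta_k$; passing to the $\omega$-limit through \eqref{eq:ultralimits_algebraic_properties} gives $\sfd_\omega(\gamma(s),\gamma(t))=|s-t|$ for all $s,t\in[0,1]$. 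Since $\gamma(0)=x_\omega$ and $\sfd_\omega(\gamma(0),\gamma(1))=1$, the curve $\gamma$ is a non-constant geodesic segment in an $\omega$-limit of $(\X,\sfd)$, contradicting (iii).

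Finally, once (i)--(v) are shown to be equivalent, they are all preserved under biLipschitz equivalence, because (i) manifestly is: if $(\X',\sfd')$ is biLipschitz equivalent to $(\X,\sfd)$ and the latter is biLipschitz equivalent to a snowflake, then composing the two surjective biLipschitz maps shows $(\X',\sfd')$ is biLipschitz equivalent to a snowflake as well. I expect the main obstacle to be the bookkeeping in (iii) $\Rightarrow$ (iv): one must choose the rescaling so that the chains do not degenerate, transport the arc-length parametrization through the ultralimit, and control the error terms $\delta_k$ and $1/k$ uniformly in $s$ and $t$ --- the upper and lower estimates pinch exactly to $|s-t|$ only because the collinearity defect $\ell_k^{(k)}-1$ tends to $0$.
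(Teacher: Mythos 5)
Your cycle of implications coincides with the paper's, and the links (i)$\Rightarrow$(ii), (ii)$\Rightarrow$(iii), (iv)$\Rightarrow$(v), (v)$\Rightarrow$(i) are handled exactly as in the paper (the last two by Lemma~\ref{lemma:SRA_implies_not_line_fitting} and Proposition~\ref{prop:snowflake_iff_not_line_fitting}). Where you genuinely diverge is the key implication (iii)$\Rightarrow$(iv), and your argument there is correct but substantially more elementary. The paper also negates the SRA condition along a sequence of chains and rescales by the endpoint distance, but it parametrizes the limit curve by index fraction, $\gamma(\tau)=\olim x_n^{\lfloor \tau n\rfloor}$, and must then prove that the limiting speed is constant: this is done by squaring, applying Cauchy--Schwarz and the subadditivity of $(x,y)\mapsto x^2/y$ to pinch the chain of inequalities \eqref{eq:omega_limit_geodesic} between $((n+1)/n)^4$ and $1$, and then invoking the non-standard dominated convergence theorem (Proposition~\ref{prop:dominated_convergence}) on $\hat{[0,1]}_\omega$ to extract the equality case of Cauchy--Schwarz and conclude that the speed $\hat f_\omega$ is $\hat{\mathcal L}^1$-a.e.\ constant. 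You instead reparametrize each discrete chain by rescaled arc length via $\phi_k(t)$, so that the chord $r_k\sfd(x^{(k)}_{\phi_k(s)},x^{(k)}_{\phi_k(t)})$ is squeezed directly between $|s-t|-\delta_k-1/k$ (one global triangle inequality through $x_1^{(k)}$ and $x_k^{(k)}$, using that the total chain length exceeds the endpoint distance by at most a factor $1+1/k$) and $|s-t|+\delta_k$ (the triangle inequality along the chain); passing to the $\omega$-limit with \eqref{eq:ultralimits_algebraic_properties} yields the geodesic identity outright, with no measure theory or non-standard analysis. I checked both estimates, the bound $\delta_k\le(1+1/k)^2/(k-1)\to 0$ (this is where the near-equal-spacing hypothesis enters), and the non-degeneracy $\sfd(x_1^{(k)},x_k^{(k)})>0$, which indeed follows from the strict failure of \eqref{eq:SRA}; the only cosmetic slip is that your chain upper bound should read $r_k\sfd(x_a^{(k)},x_b^{(k)})\le|\ell_a^{(k)}-\ell_b^{(k)}|$ with the rescaling factor included. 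What the paper's heavier route buys is a rigidity statement (a.e.\ constancy of the limit speed via equality in Cauchy--Schwarz) phrased so as to generalize to the higher-dimensional setting the authors announce; what your route buys is a short, self-contained pinching argument for the one-dimensional case at hand.
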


\begin{remark}
    Since conditions (ii) and (iii) are equivalent to properties that do not involve the non-principal ultrafilter $\omega$, then they do not actually depend on the choice of the ultrafilter. This is also consequence of Proposition \ref{prop:compact_subsets_different_ultralimits}, because the image of a rectifiable curve is compact. 
\end{remark}

\begin{remark}
    A priori, it is not clear that conditions (iii) and especially (iv) are biLipschitz invariant. The biLipschitz invariance of (v) comes from Proposition \ref{prop:snowflake_iff_not_line_fitting}, but it can also be proved without difficulties directly from the definition.
\end{remark}

\begin{proof}[Proof of Theorem \ref{thm:characterization_snowflakes}]
    (i)$\Rightarrow$(ii). Suppose that $(\X,\sfd)$ is biLipschitz equivalent to an $\alpha$-snowflake. Then every $\omega$-limit of $(\X,\sfd)$ is biLipschitz equivalent to an $\omega$-limit of an $\alpha$-snowflake by Lemma \ref{lemma:properties_of_ultralimits}.(i), which is itself an $\alpha$-snowflake, by Lemma \ref{lemma:properties_of_ultralimits}.(ii). This implies that every $\omega$-limit of $(\X,\sfd)$ does not contain rectifiable curves, by Remark \ref{rem:no_rect_in_snowflake}.

    (ii)$\Rightarrow$(iii). This is obvious since a geodesic segment is a rectifiable curve.
    
    (iii)$\Rightarrow$(iv). Suppose by contradiction that (iv) does not hold. Then, for every $n\in \N$ we can find points $x^0_n,\ldots x_n^n \in (\X,\sfd)$
    such that
    \begin{equation}
    \label{eq:contradiction_in_SRAk}
    \begin{aligned}
        \max\{\sfd(x^i_n,x^{i+1}_n):\, i=0,\dots,n-1\} &\le \left(1+\frac{1}{n}\right)\min \{\sfd(x^i_n,x^{i+1}_n):\, i=0,\dots,n-1\},\\
        \sum_{i=0}^{n-1}\sfd(x^{i}_n,x^{i+1}_n) &< \left(1 +\frac{1}{n} \right)\sfd(x^{0}_n,x^n_n).\\
    \end{aligned}
    \end{equation}
    We set $r_n := 1/\sfd(x_n^0, x_n^n)$.
    Combining the two conditions in \eqref{eq:contradiction_in_SRAk}, we have that
    \begin{equation}
        \label{eq:inequality_m_i}
        nr_n\sfd(x^i_n,x^{i+1}_n) \le \left(\frac{n+1}{n}\right)^2
    \end{equation}
    for every $i=0,\dots,n-1$, thus, by squaring and summing over $i$  
    \begin{equation}
        \label{eq:inequality_m_i_final}
         n \sum_{i=0}^{n-1} r_n^2\sfd(x^i_n,x^{i+1}_n)^2 \le \left(\frac{n+1}{n}\right)^4.
    \end{equation}
    
    We fix $0< t<s < 1$. Fix $n \ge \max\{\frac{1}{t}, \frac{1}{s-t},\frac{1}{1-s}\}$, so that $\lfloor tn \rfloor \ge 1$, $\lfloor s n \rfloor-\lfloor tn \rfloor\ge 1$ and $n-\lfloor sn \rfloor\ge 1$. We deduce that
    
    {\allowdisplaybreaks[4]
    \begin{align}
        \label{eq:omega_limit_geodesic}
        \nonumber \left( \frac{n+1}{n}\right)^4 &\stackrel{\eqref{eq:inequality_m_i_final}}{\ge}  n\left(\sum_{i=0}^{\lfloor tn \rfloor-1} r_n^2\sfd(x^i_n,x^{i+1}_n)^2+ \sum_{i=\lfloor t n \rfloor}^{\lfloor s n \rfloor-1} r_n^2\sfd(x^i_n,x^{i+1}_n)^2+ \sum_{i=\lfloor sn \rfloor}^{n-1} r_n^2\sfd(x^i_n,x^{i+1}_n)^2\right) \\
        \nonumber& \ge n\frac{1}{\lfloor tn \rfloor} \left(\sum_{i=0}^{\lfloor tn \rfloor -1} r_n\sfd(x^i_n,x^{i+1}_n) \right)^2 +n\frac{1}{\lfloor sn \rfloor - \lfloor tn \rfloor} \left(\sum_{i=\lfloor tn \rfloor}^{\lfloor sn \rfloor - 1} r_n\sfd(x^i_n,x^{i+1}_n) \right)^2  \\ 
        &+ n\frac{1}{n-\lfloor sn \rfloor} \left(\sum_{i=\lfloor sn \rfloor}^{n-1} r_n\sfd(x^i_n,x^{i+1}_n) \right)^2 \\
        \nonumber & \ge \frac{n}{\lfloor tn \rfloor} r_n^2\sfd(x^0_n, x^{\lfloor tn \rfloor}_n)^2 + \frac{n}{\lfloor sn \rfloor - \lfloor tn \rfloor} r_n^2\sfd(x^{\lfloor tn \rfloor}_n,x^{\lfloor sn \rfloor }_n)^2 \\
        \nonumber&+ \frac{n}{n-\lfloor sn \rfloor} r_n^2\sfd(x^{\lfloor sn \rfloor}_n,x_n^n)^2\\
        \nonumber & \ge r_n^2\left( \sfd(x^0_n, x^{\lfloor tn \rfloor}_n)+ \sfd(x^{\lfloor tn \rfloor}_n,x^{\lfloor sn \rfloor }_n)+ \sfd(x^{\lfloor sn \rfloor}_n,x_n^n) \right)^2 \ge 1.
    \end{align}
    }
    
    Here, in the second inequality we used the Cauchy-Schwarz inequality, in the third one the triangular inequality and in the second-to-last one we used that the function $(x,y) \mapsto x^2/y$ is subadditive, because positively $1$-homogeneous and convex. 
    
    We consider $\olim (\X,r_n\sfd,x_n^0) =: (\X_\omega, \sfd_\omega, x_\omega^0)$, which is an $\omega$-limit of $(\X,\sfd)$. We define $\gamma \colon [0,1] \to \X_\omega$, $\gamma(\tau) := \olim x_n^{\lfloor \tau n \rfloor}$ and we claim that $\gamma$ is a non-constant geodesic, contradicting (iii). Passing all the inequalities of \eqref{eq:omega_limit_geodesic} to the $\omega$-limit, we have that the corresponding lines of inequalities in $(\X_\omega, \sfd_\omega)$ are indeed equalities, by the last property in \eqref{eq:ultralimits_algebraic_properties}. In particular, using that $\olim n/\lfloor{tn \rfloor}=1/t$ and $\olim n/\lfloor{sn \rfloor}=1/s$ and the first two properties in \eqref{eq:ultralimits_algebraic_properties} we get that

{\allowdisplaybreaks[4]
\begin{align}
    \label{eq:rigidity1}
    \olim \sum_{i=\lfloor tn \rfloor }^{\lfloor sn \rfloor-1} r_n\sfd(x^i_n,x^{i+1}_n) &= \olim r_n\sfd(x^{\lfloor tn \rfloor}_n,x^{\lfloor sn \rfloor}_n) = \sfd_\omega(\gamma(t),\gamma(s)) 
\end{align}
}
and

\begin{align}
\label{eq:rigidity2}
    \olim (s-t)n \sum_{i=\lfloor tn \rfloor}^{\lfloor sn \rfloor -1} r_n^2\sfd(x^i_n,x^{i+1}_n)^2 &= \olim \left(\sum_{i=\lfloor tn \rfloor}^{\lfloor sn \rfloor -1} r_n\sfd(x^i_n,x^{i+1}_n)\right)^2.
\end{align}
In the case $t=0$ or $s=1$, one obtains \eqref{eq:rigidity1} and \eqref{eq:rigidity2} by repeating verbatim the computations in \eqref{eq:omega_limit_geodesic} by, respectively, neglecting the first and the last terms in the summands.

We define the function $f_n \colon [0,1] \to \R$ as $f_n(r):= nr_n\sfd(x^i_n,x^{i+1}_n)$ if $r \in [\frac{i}{n},\frac{i+1}{n}]$. We compute
\begin{equation*}
    \int_{\frac{\lfloor{tn}\rfloor}{n}}^{\frac{\lfloor{sn}\rfloor}{n}} f_n(r)^2 \,\d r = \sum_{i=\lfloor tn \rfloor}^{\lfloor sn \rfloor-1}n^2 r_n^2\sfd(x_n^i,x_n^{i+1})^2\frac{1}{n} = n \sum_{i=\lfloor tn \rfloor}^{\lfloor sn \rfloor-1} r_n^2\sfd(x_n^i,x_n^{i+1})^2
\end{equation*}
and
\begin{equation*}
    \frac{1}{s-t} \left( \int_{\frac{\lfloor{tn}\rfloor}{n}}^{\frac{\lfloor{sn}\rfloor}{n}} f_n(r)\,\d r \right)^2 = \frac{1}{s-t} \left( \sum_{i=\lfloor tn \rfloor}^{\lfloor sn \rfloor-1} r_n\sfd(x_n^i,x_n^{i+1})\right)^2.
\end{equation*}
This implies, together with \eqref{eq:rigidity2}, that
\begin{equation}
\label{eq:dominated_convergence_limitoutside}
    \olim \frac{1}{s-t} \left( \int_{\frac{\lfloor{tn}\rfloor}{n}}^{\frac{\lfloor{sn}\rfloor}{n}} f_n(r)\,\d r \right)^2 = \olim \int_{\frac{\lfloor{tn}\rfloor}{n}}^{\frac{\lfloor{sn}\rfloor}{n}} f_n(r)^2 \,\d r.
\end{equation}

We observe also that $f_n \le 4$ for every $n\in \N$, by \eqref{eq:inequality_m_i}. We consider the map $\hat{f}_\omega \colon \hat{[0,1]}_\omega \to \R$, $\hat{f}_\omega([a_n]) := \olim f_n(a_n)$ and the non-standard sets $\hat{I}_\omega^{t,s} \subseteq \hat{\R}_\omega$ defined by the sequence $I_n^{t,s} := \{[\frac{\lfloor tn\rfloor}{n}, \frac{\lfloor sn\rfloor}{n}]\}_{n\in \N}$. Proposition \ref{prop:dominated_convergence} applied to the sequence of functions $g_n = f_n \chi_{I^{t,s}_n}$, whose $\omega$-limit is $\hat{f}_\omega \chi_{\hat{I}^{t,s}_\omega}$ and \eqref{eq:dominated_convergence_limitoutside} imply that
\begin{equation}
    \label{eq:equality_Cauchy_Schwarz}
    \frac{1}{s-t} \left(\int_{\hat{I}^{t,s}_\omega} \hat{f}_\omega\,\d \hat{\mathcal{L}^1} \right)^2 = \int_{\hat{I}^{t,s}_\omega}\hat{f}_\omega^2\,\d \hat{\mathcal{L}^1}.
\end{equation}
Observe that 
\begin{equation}
    \label{eq:measure_ultra_interval}
    \hat{\mathcal{L}^1}(\hat{I}^{t,s}_\omega) = \olim \mathcal{L}^1(I_m^{t,s}) = \olim \frac{1}{m}\left(\lfloor sm \rfloor - \lfloor tm \rfloor \right) = s - t.
\end{equation}
Combining \eqref{eq:equality_Cauchy_Schwarz} and \eqref{eq:measure_ultra_interval} and using the Cauchy-Schwarz inequality we conclude that $\hat{f}_\omega$ is $\hat{\mathcal{L}}^1$-a.e. constant on $\hat{I}_\omega^{t,s}$. The choice of $t,s$ is completely arbitrary. Using different values of $t$ and $s$ we conclude that $\hat{f}_\omega$ is $\hat{\mathcal{L}}^1$-a.e. constant on $\hat{[0,1]}_\omega$ and equal to $\Lambda >0$. 

In particular, we have that
\begin{equation*}
\begin{aligned}
\label{eq:from_discrete_to_continuous}
    \sfd_\omega(\gamma(t), \gamma(s)) &\stackrel{\eqref{eq:rigidity1}}{=} \olim \sum_{i=\lfloor tn \rfloor }^{\lfloor sn \rfloor-1} r_n\sfd(x^i_n,x^{i+1}_n) \\
    &=\olim \int_{\frac{\lfloor{tn}\rfloor}{n}}^{\frac{\lfloor{sn}\rfloor}{n}} f_n\,\d \mathcal{L}^1 = \int_{\hat{I}_\omega^{t,s}} \hat{f}_\omega\,\d \hat{\mathcal{L}}^1 = \Lambda|t-s|.
\end{aligned}
\end{equation*}
for every $0 \le t < s \le 1$.
By using $t=0$ and $s=1$, we conclude that $\Lambda=1$.
Therefore, the curve $\gamma$ is a non-constant geodesic in $\X_\omega$.

    (iv)$\Rightarrow$(v). This is Lemma \ref{lemma:SRA_implies_not_line_fitting}.

    (v)$\Rightarrow$(i). This is Proposition \ref{prop:snowflake_iff_not_line_fitting}.
\end{proof}

\section{Absence of currents and snowflakes}

In this section, we characterize the metric spaces biLipschitz equivalent to a snowflake in terms of the triviality of the space of metric $1$-currents on the $\omega$-limits of the space. 

Given a metric space $(\X,\sfd)$, we denote the space of Lipschitz functions from $\X$ into $\R$ with ${\rm Lip}(\X)$ and ${\rm Lip}_b(\X):=\{f \in {\rm Lip}(\X):\,f\text{ is bounded.}\}$. Given $f \in {\rm Lip}(\X)$, we denote by ${\rm LIP}(f)$ its Lipschitz constant. We set $D^1(\X):= \Lip_b(\X)\times \Lip(\X)$. 

The definition of metric currents was given in \cite{AK00} and is as follows.

\begin{definition}[Metric $1$-currents]
    Let $(\X,\sfd)$ be a metric space. We say that a multilinear functional $T\colon D^1(\X) \to \mathbb{R}$ is a \emph{metric $1$-current} provided it satisfies the following axioms:
\begin{itemize}
    \item(Locality) For every $(f,\pi) \in D^1(\X)$, we have $T(f,\pi)=0$ if $\pi$ is constant on a neighborhood of $\{ f \neq 0 \}$.
    \item(Continuity) Given $f\in \Lip_b(\X)$ and $\pi_n,\pi \in \Lip(\X)$ such that
    $\sup_{n \in \mathbb{N}} \Lip (\pi_n) < \infty$ and $\pi_n(x)\to \pi(x)$ for every $x \in \X$, we have
    \begin{equation*}
        \lim_{n \to \infty} T(f,\pi_n)= T(f,\pi).
    \end{equation*}
    \item(Finite-mass condition) There exists a nonnegative finite Borel measure $\mu$ on $\X$ such that
    \begin{equation}
        \label{eq:mass_bound_definition}
        \left|T(f,\pi)\right|\le {\rm LIP}(\pi) \int |f|\,\d \mu.
    \end{equation}
\end{itemize}

The minimal measure $\mu$ that satisfies \eqref{eq:mass_bound_definition} is called the \emph{mass measure} of $T$ and is denoted by $\|T\|$. We call $\Mass(T):=\|T\|(\X)$ the mass of $T$. We denote the space of all metric $1$-currents on $(\X,\sfd)$ by $\cM_1(\X,\sfd)$, or simply by $\cM_1(\X)$, if there is no ambiguity. 
\end{definition}

The \emph{boundary} of a metric $1$-current $T$ is the linear functional $\partial T\colon {\rm Lip}_b(\X) \to \mathbb{R}$ defined as
\begin{equation*}
    \partial T(f):= T(1,f).
\end{equation*}
We say that a metric $1$-current $T$ is \emph{normal}, provided that $\partial T$ admits a nonnegative Borel measure $\nu$ on $\X$ such that
\begin{equation*}
    |\partial T(f)|\le \int |f|\,\d \nu\quad \text{for all }f \in {\rm Lip}_b(\X).
\end{equation*}
We denote the space of all normal $1$-currents on $(\X,\sfd)$ by $\cN_1(\X)$. We also set
\begin{equation*}
\begin{aligned}
    \cM_1^{\rm reg}(\X)&:=\{ T \in \cM_1(\X):\, \|T\|\text{ is inner regular by compact sets}\},\\
    \cN_1^{\rm reg}(\X)&:=\cN_1(\X) \cap \cM_1^{\rm reg}(\X).
\end{aligned}
\end{equation*}

\begin{remark}
We recall that Ambrosio-Kirchheim in \cite{AK00}, based on \cite{Federer69}, worked under the additional assumption that the cardinality of every set is an Ulam number. This axiom is consistent with the standard ZFC set-theory axioms. Under this assumption, every nonnegative Borel measure on $(\X,\sfd)$ is inner regular by compact sets (see \cite[Lemma 2.9]{AK00}). Under this assumption, $\cM_1(\X)= \cM_1^{\rm reg}(\X)$ and $\cN_1(\X)=\cN_1^{\rm reg}(\X)$.
\end{remark}

We give an example of a metric $1$-current. 
Given a metric space $(\X,\sfd)$, a curve fragment is a Lipschitz map $\gamma \colon K \to X$, where $K \subseteq [0,1]$ is compact. We endow the set of all curve fragments by the topology induced by the Hausdorff distance of their graphs in $\R \times \X$. 

Given a curve fragment $\gamma$, we define 
\begin{equation}
\label{eq:associate_current}
    \curr{\gamma}(f,\pi):=\int_K (f \circ \gamma)(t)(\pi \circ \gamma)'(t)\,\d t,
\end{equation}
where $(\pi \circ \gamma)'$ is $\mathcal{L}^1$-a.e.\ well-defined on $K$ by Rademacher's theorem. We have that $\curr{\gamma} \in \cM_1(\X)$. If $K=[0,1]$, then $\curr{\gamma} \in \cN_1(\X)$.

We recall that a metric space $(\X,\sfd)$ is called $1$-rectifiable if it can be covered, up to $\mathcal{H}^1$-null set, by Lipschitz images of Borel subsets of $\R$. A subset $E\subset \X$ of a metric space is $1$-rectifiable if $(E,\sfd)$ is $1$-rectifiable. A metric space $(\X,\sfd)$ is purely $1$-unrectifiable if it does not contain any rectifiable $1$-set $E$ such that $\mathcal{H}^1(E)>0$.

\begin{remark}
\label{rem:using_area_formula}
We refer the reader for instance to \cite[Section 2]{BateCaputoTakacValentineWald2025} for the definition of metric speed of a curve fragment. Given a curve fragment $\gamma \colon K \to \X$, it follows by the definition of mass that $\Mass(\curr{\gamma})\le \int_K |\dot{\gamma}_t|\,\d t$. Hence, if $\Mass(\curr{\gamma}) > 0$, then $|\dot{\gamma}_t|>0$ on a set of positive Lebesgue measure in $K$, then by \cite[Theorem 7]{Kirchheim1994}, $\mathcal{H}^1({\rm Im}(\gamma))>0$, so the image of $\gamma$ is a non-trivial $1$-rectifiable set. 
\end{remark}

We now prove Theorem \ref{thm:main} in the following generalized formulation.

\begin{theorem}
\label{thm:snowflake_currents}
    Let $(\X,\sfd)$ be a metric space. Let $\omega$ be a non-principal ultrafilter. The following are equivalent.
    \begin{itemize}
        \item[(i)] $(\X,\sfd)$ is biLipschitz equivalent to a snowflake.
        \item[(ii)] Every $\omega$-limit of $(\X,\sfd)$ does not contain non-constant rectifiable curves.
        \item[(iii)] Every $\omega$-limit of $(\X,\sfd)$ does not contain non-trivial $1$-rectifiable sets, i.e.\ is purely $1$-unrectifiable.
        \item[(iv)] Every $\omega$-limit $(\X_\omega, \sfd_\omega, x_\omega)$ of $(\X,\sfd)$ has $\cN_1^{\rm reg}(\X_\omega)=\{0\}$.
        \item[(v)] Every $\omega$-limit $(\X_\omega, \sfd_\omega, x_\omega)$ of $(\X,\sfd)$ has $\cM_1^{\rm reg}(\X_\omega)=\{0\}$.
    \end{itemize}
\end{theorem}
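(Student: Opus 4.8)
Theorem \ref{thm:characterization_snowflakes} already gives (i)$\Leftrightarrow$(ii), so it remains to fold in (iii)--(v); I would prove the cycle (i)$\Rightarrow$(iii)$\Rightarrow$(v)$\Rightarrow$(iv)$\Rightarrow$(ii), which together with Theorem \ref{thm:characterization_snowflakes} yields all the equivalences. Since every condition is thereby linked to (i), which does not mention $\omega$, none of them depends on the choice of non-principal ultrafilter, and Theorem \ref{thm:main} follows at once.

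For (i)$\Rightarrow$(iii), by Lemma \ref{lemma:properties_of_ultralimits} every $\omega$-limit of a space biLipschitz equivalent to an $\alpha$-snowflake is biLipschitz equivalent to an $\alpha$-snowflake $(\Y,\rho^\alpha)$ of a genuine metric space, so it suffices to check that snowflakes are purely $1$-unrectifiable. If one were not, the decomposition of $1$-rectifiable sets into biLipschitz pieces would give a compact $A\subseteq\R$ of positive Lebesgue measure and a map $g\colon A\to\Y$ that is biLipschitz for $\rho^\alpha$, so that $\rho(g(s),g(t))\asymp|s-t|^{1/\alpha}$ with $1/\alpha>1$; working near a density point of $A$ one selects, for arbitrarily large $N$, points $a_0<\dots<a_N$ of $A$ that are roughly equally spaced and span an interval of a fixed positive length, and then the triangle inequality in $(\Y,\rho)$ bounds $\rho(g(a_0),g(a_N))$ from below by a fixed positive constant while $\sum_j\rho(g(a_j),g(a_{j+1}))\lesssim N^{1-1/\alpha}\to 0$ --- a contradiction. (Alternatively, (ii)$\Leftrightarrow$(iii) can be deduced from the description of tangents of $1$-rectifiable metric spaces in \cite{Bate2022}: a non-trivial $1$-rectifiable subset of some $\omega$-limit has at a.e.\ point a rescaled tangent containing a non-constant geodesic; that tangent sits in an ultralimit of an ultralimit of $(\X,\sfd)$, hence in an $\omega'$-limit for some non-principal ultrafilter $\omega'$, and Proposition \ref{prop:compact_subsets_different_ultralimits} transports the geodesic back into an $\omega$-limit for the prescribed $\omega$.)

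The currents enter via (iii)$\Rightarrow$(v) and (iv)$\Rightarrow$(ii), both argued contrapositively. If an $\omega$-limit $\X_\omega$ carries a non-zero $T\in\cM_1^{\rm reg}(\X_\omega)$, the Smirnov-type decomposition of metric $1$-currents with inner-regular mass \cite{BateCaputoTakacValentineWald2025,ArroyoRabasaBouchitte2025} writes $T=\int\curr{\gamma}\,\d\mu(\gamma)$ as a superposition of currents associated with curve fragments, with $\Mass(T)\le\int\Mass(\curr{\gamma})\,\d\mu(\gamma)$; since $\Mass(T)>0$, a set of curve fragments of positive $\mu$-measure has $\Mass(\curr{\gamma})>0$, hence, by Remark \ref{rem:using_area_formula}, non-trivial $1$-rectifiable image, so $\X_\omega$ is not purely $1$-unrectifiable. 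Conversely, a non-constant rectifiable curve in $\X_\omega$ --- restricted to a subinterval if it happens to be a loop --- may be assumed to have distinct endpoints; reparametrising it by arc length and rescaling the parameter to $[0,1]$ yields a Lipschitz $\sigma\colon[0,1]\to\X_\omega$ with $\sigma(0)\neq\sigma(1)$, for which $\curr{\sigma}$ is a normal current (the parameter interval is all of $[0,1]$), satisfies $\Mass(\curr{\sigma})\ge\sfd_\omega(\sigma(0),\sigma(1))>0$ (test $\curr{\sigma}$ against $f\equiv 1$ and $\pi=\sfd_\omega(\sigma(0),\cdot)$), and has mass measure supported on the compact set ${\rm Im}(\sigma)$, hence inner regular by compact sets; thus $\cN_1^{\rm reg}(\X_\omega)\neq\{0\}$. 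Finally (v)$\Rightarrow$(iv) is just the inclusion $\cN_1^{\rm reg}(\X_\omega)\subseteq\cM_1^{\rm reg}(\X_\omega)$.

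The substance of the argument is the Smirnov-type superposition theorem for metric $1$-currents, which I would use as a black box; the restriction throughout to currents and measures that are inner regular by compact sets is forced by the possible non-separability of $\omega$-limits, and all the constructions above (curve-fragment currents, arc-length reparametrisations) stay inside that class automatically. If one prefers to route (ii)$\Leftrightarrow$(iii) through \cite{Bate2022} rather than the elementary argument, then the book-keeping of nested ultralimits and the change of ambient ultrafilter --- resolved by Proposition \ref{prop:compact_subsets_different_ultralimits} --- becomes the main technical obstacle.
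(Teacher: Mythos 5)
Your argument is correct, and for the currents part it coincides with the paper's: (iii)$\Rightarrow$(v) via the Smirnov-type decomposition together with Remark \ref{rem:using_area_formula}, (v)$\Rightarrow$(iv) by inclusion, and (iv)$\Rightarrow$(ii) by the normal current $\curr{\sigma}$ of a Lipschitz reparametrization (your explicit test pair $f\equiv 1$, $\pi=\sfd_\omega(\sigma(0),\cdot)$ is a nice way to see non-triviality). Where you genuinely diverge is in how rectifiable \emph{sets} enter the cycle. The paper proves (ii)$\Rightarrow$(iii) by contraposition through \cite{Bate2022}: a non-trivial $1$-rectifiable subset of an $\omega$-limit has a Gromov--Hausdorff tangent isometric to $\R$ at some point, which by Remark \ref{rem:GH_to_ultralimits} and the fact that an ultralimit of ultralimits is an $\omega'$-limit lands a geodesic in an $\omega'$-limit of $(\X,\sfd)$, and Proposition \ref{prop:compact_subsets_different_ultralimits} transports it back to the prescribed $\omega$. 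You instead prove (i)$\Rightarrow$(iii) directly, by showing that a snowflake is purely $1$-unrectifiable via the $(1/\alpha)$-H\"older estimate and the triangle inequality in the underlying metric $\rho$; this is more elementary and bypasses the nested-ultralimit bookkeeping entirely (your parenthetical alternative is exactly the paper's route). Both close the cycle of implications correctly.

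Two points to tighten. First, in the density-point selection you cannot keep the spanned interval of \emph{fixed} length and let $N\to\infty$: for a general positive-measure compact $A\subseteq\R$, gaps of $A$ obstruct arbitrarily fine roughly-equal spacing at a fixed scale. The repair is standard and scale-invariant: at a density point $x$, for every $\varepsilon>0$ there is $\delta>0$ with all gaps of $A$ in $[x,x+\delta]$ of length $<\varepsilon\delta$, whence a chain $a_0<\dots<a_N$ in $A$ with gaps at most $2\varepsilon\delta$ gives $\sum_j\rho(g(a_j),g(a_{j+1}))\lesssim(\varepsilon\delta)^{1/\alpha-1}\delta$ against $\rho(g(a_0),g(a_N))\gtrsim\delta^{1/\alpha}$, and the ratio is $\asymp\varepsilon^{1/\alpha-1}\to0$ independently of $\delta$; so the contradiction is obtained by letting $\varepsilon\to0$, not $N\to\infty$ at fixed scale. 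Second, in (iii)$\Rightarrow$(v) the decomposition theorem \cite[Theorem 1.4]{BateCaputoTakacValentineWald2025} is stated for complete \emph{separable} spaces, while an $\omega$-limit need not be separable; this is precisely where inner regularity of $\|T\|$ is used. One must first restrict $T$ to the closure of a $\sigma$-compact set of full $\|T\|$-measure (via \cite[Proposition B.4]{BateCaputoTakacValentineWald2025}) and decompose there, as the paper does. You flag the non-separability issue in general terms, but this restriction step should be made explicit rather than absorbed into the black box.
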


We recall that every $\omega$-limit is a complete metric space, see \cite[Proposition 7.44]{KapovichDrutu2018}.

\begin{proof}
(i)$\Leftrightarrow$(ii) is proved in Theorem \ref{thm:characterization_snowflakes}.

(ii)$\Rightarrow$(iii) By contradiction, suppose that there exists an $\omega$-limit $(\X_\omega, \sfd_\omega, x_\omega)$ of $(\X,\sfd)$ containing a non-trivial $1$-rectifiable set $E$. By the proof of \cite[Theorem 6.6]{Bate2022}, there exist a closed subset $C \subseteq E$ and a point $x\in C$ that has a pointed Gromov-Hausdorff tangent isometric to $\R$. 
By item 2) in Remark \ref{rem:GH_to_ultralimits}, $(\X_\omega,\sfd_\omega)$ has an $\omega$-limit which contains $\R$. The $\omega$-limit of an $\omega$-limit is isometric to an $\omega'$-limit for some other non-principal ultrafilter $\omega'$, by \cite[Proposition 7.64]{KapovichDrutu2018}. This implies that there exists an $\omega$'-limit of $(\X,\sfd)$ that contains a geodesic segment. Hence, by Proposition \ref{prop:compact_subsets_different_ultralimits} or Theorem \ref{thm:characterization_snowflakes}, some $\omega$-limit of $(\X,\sfd)$ contains a geodesic segment, contradicting (ii).

(iii)$\Rightarrow$(v). By contradiction, there exist an $\omega$-limit $(\X_\omega,\sfd_\omega, x_\omega)$ and $T\in \cM_1^{\rm reg}(\X_\omega,\sfd_\omega)$ with $T \neq 0$. Since $\|T\|$ is inner regular by compact sets, there exists a $\sigma$-compact set $E\subseteq \X_\omega$ such that $\|T\|(\X_\omega\setminus E)=0$. We consider $\Y:=\bar{E}$; we have $\|T\|(\X_\omega\setminus \Y)=0$.

By \cite[Proposition B.4]{BateCaputoTakacValentineWald2025}, which can be applied since any $\omega$-limit is complete, there exists a metric $1$-current $\tilde{T} \in \cM_1(\Y,\sfd_\omega)$ such that $\Mass(\tilde{T})=\Mass(T)\neq 0$. We apply \cite[Theorem 1.4]{BateCaputoTakacValentineWald2025} to $(\Y,\sfd_\omega)$, which is complete and separable. Then there exists a finite non-negative Borel measure $\eta$ on curve fragments of $\Y$ such that $\tilde{T}=\int \curr{\gamma}\,\d \eta(\gamma)$ and
\begin{equation}
    \label{eq:mass_condition}
    \Mass(\tilde{T})=\int \Mass(\curr{\gamma})\,\d \eta(\gamma).
\end{equation}
Since $\Mass(\tilde{T})\neq 0$, the identity \eqref{eq:mass_condition} implies that there exists at least a curve fragment $\gamma\colon K \to \Y$ such that $\Mass(\curr{\gamma})\neq 0$, thus its image is a non-trivial $1$-rectifiable set, by Remark \ref{rem:using_area_formula}.

(v)$\Rightarrow$(iv) is trivial because $\cN_1^{\rm reg}(\X_\omega)\subseteq \cM_1^{\rm reg}(\X_\omega)$. 

(iv)$\Rightarrow$(ii). Every non-constant rectifiable curve $\gamma \colon [0,1]\to \X_\omega$ has a Lipschitz reparametrization $\tilde{\gamma}$, that induces the non-trivial normal $1$-current $\curr{\tilde{\gamma}}$.
\end{proof}


If the $\omega$-limit is a proper metric space (which is equivalent to separable for asymptotic cones by \cite{Sisto2012}), so in particular it is complete and separable, every metric $1$-current has mass that is inner regular by compact sets. The properness of every $\omega$-limit of $(\X,\sfd)$ is equivalent to $(\X,\sfd)$ being metrically doubling: there exists $D\in \N$ such that for every $r\ge 0$ it is possible to cover every ball of radius $2r$ with at most $D$ balls of radius $r$. In this case, every $\omega$-limit is indeed a Gromov-Hausdorff limit.

\begin{corollary}
\label{cor:trivial_currents_doubling}
    Let $(\X,\sfd)$ be a metrically doubling metric space. Let $\omega$ be a non-principal ultrafilter. The following are equivalent.
    \begin{itemize}
        \item[(i)] $(\X,\sfd)$ is biLipschitz equivalent to a snowflake.
        \item[(ii)] Every $\omega$-limit $(\X_\omega, \sfd_\omega, x_\omega)$ of $(\X,\sfd)$ has \ $\cN_1(\X_\omega)=\{0\}$.
        \item[(iii)] Every $\omega$-limit $(\X_\omega, \sfd_\omega, x_\omega)$ of $(\X,\sfd)$ has \ $\cM_1(\X_\omega)=\{0\}$.
    \end{itemize}
\end{corollary}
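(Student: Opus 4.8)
The plan is to deduce Corollary \ref{cor:trivial_currents_doubling} from Theorem \ref{thm:snowflake_currents} by showing that, under the metrically doubling hypothesis, the regularity restriction on the mass of currents is automatic. First I would recall the standard fact (see the discussion preceding the corollary, together with \cite{Sisto2012}) that a metric space $(\X,\sfd)$ is metrically doubling if and only if every $\omega$-limit of $(\X,\sfd)$ is proper, i.e.\ closed bounded sets are compact. Indeed, the doubling constant passes to rescalings $r_n\sfd$ uniformly, and then to the $\omega$-limit, so balls in $(\X_\omega,\sfd_\omega)$ are totally bounded; since every $\omega$-limit is complete by \cite[Proposition 7.44]{KapovichDrutu2018}, closed balls are compact.

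Next I would observe that a proper metric space is in particular complete and separable, and more importantly every finite Borel measure on a proper (hence $\sigma$-compact) metric space is inner regular by compact sets: one exhausts $\X_\omega$ by an increasing sequence of compact balls $\overline{B}(x_\omega,j)$, and for any Borel set $A$ and any $\delta>0$ one picks $j$ with $\|T\|(A\setminus \overline B(x_\omega,j))<\delta$ and then uses inner regularity of the finite Borel measure $\|T\|$ on the compact metric space $\overline B(x_\omega,j)$ (compact metric spaces are Polish, so finite Borel measures there are Radon). Consequently, for every $\omega$-limit $(\X_\omega,\sfd_\omega,x_\omega)$ of a metrically doubling space one has $\cM_1(\X_\omega)=\cM_1^{\rm reg}(\X_\omega)$ and therefore also $\cN_1(\X_\omega)=\cN_1^{\rm reg}(\X_\omega)$.

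With this identification in hand, the equivalences are immediate: condition (ii) of the corollary is literally condition (iv) of Theorem \ref{thm:snowflake_currents}, condition (iii) of the corollary is condition (v) of Theorem \ref{thm:snowflake_currents}, and condition (i) is common to both. Thus applying Theorem \ref{thm:snowflake_currents} finishes the proof. I do not expect any genuine obstacle here; the only point requiring a little care is the reduction of inner regularity of a finite Borel measure on a proper metric space to the classical Radon property on compact metric spaces, together with checking that the doubling property really does pass through the rescalings and the ultralimit construction uniformly in $n$ — both are routine but should be stated explicitly.
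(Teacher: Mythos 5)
Your proposal is correct and follows exactly the route the paper intends: the discussion immediately preceding the corollary establishes that metric doubling forces every $\omega$-limit to be proper, hence every finite Borel measure (in particular every mass measure) is inner regular by compact sets, so $\cM_1(\X_\omega)=\cM_1^{\rm reg}(\X_\omega)$ and $\cN_1(\X_\omega)=\cN_1^{\rm reg}(\X_\omega)$, and the statement reduces to Theorem \ref{thm:snowflake_currents}. The two supporting facts you flag (doubling passes uniformly through rescaling and ultralimits, and finite Borel measures on proper metric spaces are Radon) are both standard and correctly argued.
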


In Theorem \ref{thm:snowflake_currents} and Corollary \ref{cor:trivial_currents_doubling} it is necessary to ask for the triviality of metric $1$-currents on every $\omega$-limit of $(\X,\sfd)$. Indeed, the set $\N$ equipped with the Euclidean metric is an example of a metric space with trivial metric $1$-currents but with an $\omega$-limit that contains a geodesic. 

The proof of Theorem \ref{thm:snowflake_currents} also shows the following proposition.

\begin{proposition}
\label{prop:currents_purelyunrect}
    A complete metric space $(\X,\sfd)$ is purely $1$-unrectifiable if and only if $\cM_1^{\rm reg}(\X) = \{0\}$.
    Moreover, a complete and separable metric space $(\X,\sfd)$ is purely $1$-unrectifiable if and only if $\cM_1(\X) = \{0\}$.
\end{proposition}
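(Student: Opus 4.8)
The plan is to re-run both halves of the proof of Theorem~\ref{thm:snowflake_currents} with the $\omega$-limit $\X_\omega$ replaced by $\X$ itself, noticing that the only property of an $\omega$-limit exploited there is completeness, and to supplement the one implication that is not literally contained in that proof.

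For the implication ``$\X$ purely $1$-unrectifiable $\Rightarrow \cM_1^{\rm reg}(\X)=\{0\}$'' I would copy verbatim the argument for (iii)$\Rightarrow$(v): if $T\in\cM_1^{\rm reg}(\X)$ is non-zero, inner regularity of $\|T\|$ yields a $\sigma$-compact set $E$ with $\|T\|(\X\setminus E)=0$, and $\Y:=\overline{E}$ is complete (closed in the complete space $\X$) and separable (a $\sigma$-compact set, and hence its closure, is separable), with $\|T\|(\X\setminus\Y)=0$. Applying \cite[Proposition~B.4]{BateCaputoTakacValentineWald2025} (valid since $\X$, hence $\Y$, is complete) and then the Smirnov-type representation \cite[Theorem~1.4]{BateCaputoTakacValentineWald2025} on the complete separable space $\Y$ produces a curve fragment $\gamma$ with $\Mass(\curr{\gamma})\neq0$, whose image is, by Remark~\ref{rem:using_area_formula}, a non-trivial $1$-rectifiable subset of $\Y\subseteq\X$ --- contradicting pure $1$-unrectifiability of $\X$. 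For the reverse implication I argue by contraposition: if $E\subseteq\X$ is $1$-rectifiable with $\mathcal{H}^1(E)>0$, then by the structure theory of rectifiable metric sets \cite{Kirchheim1994} there is a biLipschitz embedding $\gamma\colon K\to\X$ with $K\subseteq\R$ compact, $\mathcal{L}^1(K)>0$, and $\gamma(K)\subseteq E$; the associated current $\curr{\gamma}$ has mass supported on the compact set ${\rm Im}(\gamma)$, hence lies in $\cM_1^{\rm reg}(\X)$, and it is non-zero because the injectivity of $\gamma$ precludes cancellation, so that $\Mass(\curr{\gamma})=\mathcal{H}^1({\rm Im}(\gamma))>0$. (This non-trivial current can alternatively be extracted from the converse Smirnov constructions in \cite{BateCaputoTakacValentineWald2025, ArroyoRabasaBouchitte2025}.)

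The ``moreover'' part is then immediate: a complete separable metric space is Polish, so every finite Borel measure on it is inner regular by compact sets (see \cite[Lemma~2.9]{AK00}), whence $\cM_1(\X)=\cM_1^{\rm reg}(\X)$ and the second equivalence reduces to the first. The only step that requires genuine care is the non-vanishing $\Mass(\curr{\gamma})>0$ in the reverse implication: one must pass from an arbitrary positive-measure $1$-rectifiable set to an \emph{injective} curve fragment in order to remove the cancellation that could otherwise kill the mass of $\curr{\gamma}$ (as it does, for instance, for a back-and-forth parametrization of an interval). Everything else is a transcription of the proof of Theorem~\ref{thm:snowflake_currents}.
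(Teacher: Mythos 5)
Your proposal is correct and matches the paper's intent: the paper gives no standalone proof but simply asserts that the argument for Theorem~\ref{thm:snowflake_currents} (specifically the step (iii)$\Rightarrow$(v), restricted to $\X$ itself, plus the Polish-space regularity of finite Borel measures for the ``moreover'' part) yields the proposition. Your supplementary argument for the reverse implication --- extracting a biLipschitz curve fragment of positive measure from a $1$-rectifiable set via Kirchheim's decomposition and noting that injectivity forces $\Mass(\curr{\gamma})>0$ --- is exactly the ingredient the paper leaves implicit, and it is sound.
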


\section{Applications}

In this section we view some examples of spaces that are or are not biLipschitz equivalent to snowflakes.

\begin{remark}[Tyson-Wu's example]
    \label{ex:TysonWu}
    The authors in \cite[Example 5.13]{TysonWu2005} provide an example of a metric space that is neither doubling, nor compact, it is not biLipschitz equivalent to a snowflake but such that every weak tangent in the Gromov-Hausdorff sense has no rectifiable curves. This is not in contrast with Theorem \ref{thm:characterization_snowflakes}, as we show, given any non-principal ultrafilter $\omega$, an explicit $\omega$-limit of Tyson-Wu's example that contains a non-trivial geodesic. 
    
    For every $n\in \N$ we define the set 
    \begin{equation*}
        \X_n = Q_n \cap 2^{-n}\mathbb{Z}^n \subseteq \R^n\text{, where }Q_n = \{ z\in \R^n : \|z-2e_1^n\|_\infty \le 2 n^{-\frac{1}{2}}\} \subseteq \R^n.
    \end{equation*}
    Here, $e_1^n$ is the vector $(1,0,\dots,0) \in\R^n$. For every $n \in \N$, we define $i_n \colon \R^n \to \ell^2$ as
    \begin{equation*}
        i_n(x_1,\dots, x_n):=\sum_{i=1}^n x_i \hat{e}\left(\frac{n(n-1)}{2} + i\right),
    \end{equation*}
    where $\{\hat{e}(j)\}_{j\in \N}$ is an orthonormal basis of $\ell^2$. Tyson-Wu's example is $\X := \bigcup_{n\in \N} i_n(\X_n)$, where the metric $\sfd$ is the restriction of the metric of $\ell^2$. We set $x_n:= i_n(2e_1^n) \in \X$ and $r_n:=2^{-1} n^\frac{1}{2}$. Let $\omega$ be a non-principal ultrafilter. We consider the $\omega$-limit of $(\X,\sfd)$ given by $\olim(\X,r_n\sfd,x_n) = (\X_\omega, \sfd_\omega, x_\omega)$. 
    
    We claim that $(\X_\omega, \sfd_\omega, x_\omega)$ is isometric, as pointed metric space, to $\olim ([-1,1]^n,\sfd_e,0_n)$, where $\sfd_e$ and $o_n$ respectively stands for the Euclidean distance and the origin in $\R^n$. Let $y_\omega = \olim y_n \in \X_\omega$, so $\olim r_n\sfd(x_n,y_n) < \infty$. We observe, that for $\omega$-a.e.$n$, the point $y_n$ belongs to $i_n(\X_n)$. We define the points $z_n := r_n(i_n^{-1}(y_n) - 2e_1^n) \in [-1,1]^n$. The sequence $z_n$ is admissible in $([-1,1]^n, \sfd_e, 0_n)$. This allows to define the map
    $$f\colon (\X_\omega, \sfd_\omega, x_\omega) \to \olim ([-1,1]^n,\sfd_e,0_n), \qquad y_\omega \mapsto z_\omega.$$
    By construction, $f$ is an isometric embedding. Let $z_\omega = \olim z_n \in  \olim ([-1,1]^n,\sfd_e,0_n)$. For every $n$ we define $y_n$ as the closest point to $i_n(\frac{1}{r_n}z_n + 2e_1^n)$ in $i_n(\X_n)$. Observe that $\| y_n - i_n(\frac{1}{r_n}z_n + 2e_1^n) \|_{\ell^2} < 2^{-n}$. By triangular inequality, the sequence $\{y_n\}_{n\in \N}$ is admissible, since $\olim 2^{-n}r_n = 0$. Moreover, by construction and triangular inequality, $f(\olim y_n) = z_\omega$, i.e. $f$ is surjective. This proves the claim.
    The space $\olim ([-1,1]^n,\sfd_e,0_n)$ contains a non-trivial geodesic, because ultralimit of geodesics is a geodesic.



    

\end{remark}

In some cases, it is enough to check the equivalent properties of Theorem \ref{thm:snowflake_currents} only for the metric space, and not for every possible ultralimit. For instance, this is the case for compact quasi-selfsimilar spaces.

\begin{definition}
\label{defin:quasi-selfsimilar}
    Let $\rho_0>0$ and $L_0\geq 1$. A metric space $(\X,\sfd)$ is $(L_0,\rho_0)$-quasi-selfsimilar if for every $\overline{B}(x,\rho)\subseteq \X$ with $0<\rho \leq \rho_0$ there is a map $\Phi\colon (\overline{B}(x,\rho), \frac{\rho_0}{\rho}\sfd) \to \X$ which is $L_0$-biLipschitz.
\end{definition}

This is the definition of quasi-selfsimilar spaces given in \cite{Sullivan82}. In other papers the condition is sometimes more restrictive, see \cite{Kleiner2006, Cavallucci2025}, while other times the biLipschitz condition is weakened to a quasisymmetric one, see \cite{CarrascoPiaggio2011, ErikssonBique2023preprint}.

\begin{proposition}
\label{prop:snowflake_quasi-selfsimilar}
    Let $(\X,\sfd)$ be a compact, $(L_0,\rho_0)$-quasi-selfsimilar metric space. Then the following are equivalent.
    \begin{itemize}
        \item[(i)] $(\X,\sfd)$ is biLipschitz equivalent to a snowflake.
        \item[(ii)] $(\X,\sfd)$ does not contain non-trivial rectifiable curves.
        \item[(iii)] $\cM_1(\X) = \{0\}$.
    \end{itemize}
\end{proposition}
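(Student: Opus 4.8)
\textbf{Proof proposal for Proposition \ref{prop:snowflake_quasi-selfsimilar}.}

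The plan is to establish the chain of implications (i) $\Rightarrow$ (ii) $\Rightarrow$ (iii) $\Rightarrow$ (i), using Theorem \ref{thm:snowflake_currents} for the last step, where the quasi-selfsimilarity hypothesis does the essential work. The implication (i) $\Rightarrow$ (ii) is immediate from Remark \ref{rem:no_rect_in_snowflake}, since $(\X,\sfd)$ is itself biLipschitz equivalent to a snowflake and a biLipschitz image of a rectifiable curve is a rectifiable curve. The implication (ii) $\Rightarrow$ (iii) follows from Proposition \ref{prop:currents_purelyunrect}: a compact space is complete and separable, and if $\cM_1(\X) \neq \{0\}$ then by the Smirnov-type decomposition (or directly by Remark \ref{rem:using_area_formula}) there is a curve fragment with positive-mass associated current, hence a non-trivial $1$-rectifiable subset, which one can argue contains a non-constant rectifiable curve; alternatively I would simply invoke that (ii) is equivalent to purely $1$-unrectifiability via Theorem \ref{thm:snowflake_currents} applied to the trivial ultralimit $(\X,\sfd)$ itself and Proposition \ref{prop:currents_purelyunrect}.

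The heart of the argument is (iii) $\Rightarrow$ (i). By Theorem \ref{thm:snowflake_currents}, it suffices to show that every $\omega$-limit of $(\X,\sfd)$ is purely $1$-unrectifiable, given only that $\cM_1(\X) = \{0\}$, i.e.\ that $(\X,\sfd)$ itself is purely $1$-unrectifiable (using Proposition \ref{prop:currents_purelyunrect}). Fix an $\omega$-limit $(\X_\omega, \sfd_\omega, x_\omega) = \olim(\X, r_n\sfd, x_n)$ and suppose for contradiction it contains a non-trivial $1$-rectifiable set. As in the proof of (ii) $\Rightarrow$ (iii) of Theorem \ref{thm:snowflake_currents}, by \cite[Theorem 6.6]{Bate2022} there is a point in this set with a pointed Gromov-Hausdorff tangent isometric to $\R$; in particular, for arbitrarily small scales, $(\X_\omega,\sfd_\omega)$ contains longer and longer almost-geodesic segments, which unwind to almost-geodesic configurations in $(\X, r_n\sfd)$ at $\omega$-many $n$, i.e.\ configurations violating $\mathrm{SRA}_k(\varepsilon,\alpha)$ for all $k,\varepsilon,\alpha$. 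These live at scales $\rho_n \to 0$ and are contained in small balls $\overline{B}(x_n, \rho)$ of $\X$. Now I apply quasi-selfsimilarity: each such ball, rescaled by $\rho_0/\rho$, admits an $L_0$-biLipschitz map $\Phi$ into $\X$, so the almost-geodesic configurations get pushed, up to the fixed distortion $L_0$, into $\X$ itself at a fixed scale comparable to $\rho_0$. By compactness these configurations accumulate, producing a non-constant rectifiable curve (indeed a geodesic up to the factor $L_0$, which after passing to a subsequence gives a genuine non-constant rectifiable curve) inside $\X$ — equivalently a non-trivial $1$-rectifiable subset of $\X$ — contradicting $\cM_1(\X) = \{0\}$ via Proposition \ref{prop:currents_purelyunrect} and Remark \ref{rem:using_area_formula}.

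The main obstacle I anticipate is making the last step fully rigorous: one must carefully track how the biLipschitz constant $L_0$ interacts with the "almost equally spaced, almost collinear" quantifiers. The clean way is to phrase everything through Theorem \ref{thm:characterization_snowflakes}, condition (iv): if $\X$ satisfies $\mathrm{SRA}_k(\varepsilon,\alpha)$ for suitable $k,\varepsilon,\alpha$, then so does every ball $\overline{B}(x,\rho)$ rescaled; and $L_0$-biLipschitz maps transform an $\mathrm{SRA}_k(\varepsilon',\alpha')$ configuration with good enough $\varepsilon'$ into an $\mathrm{SRA}_k(\varepsilon,\alpha)$ one — this is exactly the kind of biLipschitz robustness established in the proof of Theorem \ref{thm:characterization_snowflakes}. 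Thus, assuming (iii), $\X$ is purely $1$-unrectifiable, hence by Theorem \ref{thm:snowflake_currents} (applied with the trivial ultralimit) biLipschitz to a snowflake, hence satisfies $\mathrm{SRA}_k(\varepsilon,\alpha)$ for some parameters by Theorem \ref{thm:characterization_snowflakes}; I then check that this $\mathrm{SRA}$ property, together with quasi-selfsimilarity, propagates to all ultralimits, which by Theorem \ref{thm:characterization_snowflakes} and Theorem \ref{thm:snowflake_currents} yields (i). The bookkeeping of the $\varepsilon$'s and $L_0$ is routine but is the one place the argument could go wrong.
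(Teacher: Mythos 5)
Your overall strategy --- use compactness and quasi-selfsimilarity to pull a hypothetical curve/rectifiable set in an ultralimit back into $\X$ itself, then invoke Theorem \ref{thm:snowflake_currents} --- is the right one and matches the paper in spirit. But the way you have arranged the cycle (i)$\Rightarrow$(ii)$\Rightarrow$(iii)$\Rightarrow$(i) creates a genuine gap at (ii)$\Rightarrow$(iii). For a single metric space, ``no non-constant rectifiable curves'' does \emph{not} imply ``purely $1$-unrectifiable'': a fat Cantor set $C\subseteq\R$ is compact, totally disconnected (so contains no non-constant curves at all), yet is a $1$-rectifiable set of positive $\mathcal{H}^1$-measure with $\cM_1(C)\neq\{0\}$. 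So your claim that a non-trivial $1$-rectifiable subset ``contains a non-constant rectifiable curve'' is false, and your fallback --- invoking Theorem \ref{thm:snowflake_currents} ``applied to the trivial ultralimit'' --- misreads that theorem: its conditions (ii) and (iii) quantify over \emph{all} ultralimits and are only equivalent as such; they are not equivalent ultralimit-by-ultralimit. The same misreading makes your ``clean way'' for (iii)$\Rightarrow$(i) circular: from ``$\X$ is purely $1$-unrectifiable'' you conclude ``hence by Theorem \ref{thm:snowflake_currents} biLipschitz to a snowflake,'' which is exactly the reduction from all ultralimits to the space itself that the quasi-selfsimilarity hypothesis is supposed to provide --- you cannot get it for free.

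The fix is to reorder the cycle as the paper does: (i)$\Rightarrow$(iii) by Theorem \ref{thm:snowflake_currents}, since the compact space $\X$ is isometric to one of its own $\omega$-limits and $\cM_1^{\rm reg}(\X)=\cM_1(\X)$; (iii)$\Rightarrow$(ii) is trivial (a non-constant rectifiable curve induces a non-zero current via \eqref{eq:associate_current}); and the hard direction is (ii)$\Rightarrow$(i), where one shows that if $\X$ has no non-constant rectifiable curves then no ultralimit $\olim(\X,r_n\sfd,x_n)$ does either. The cases $\olim r_n=0$ and $\olim r_n\in(0,\infty)$ are immediate from compactness (the limit is a point, resp.\ isometric to $(\X,r\sfd)$). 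For $\olim r_n=+\infty$ a rectifiable curve in the limit lies in some $B(x_\omega,R)$, the radii $\rho_n=Rr_n^{-1}$ eventually drop below $\rho_0$, and the $L_0$-biLipschitz maps $\Phi_n$ from the definition of quasi-selfsimilarity pass to the ultralimit (Lemma \ref{lemma:properties_of_ultralimits}) to carry that curve into $\olim(\X,\sfd,\Phi_n(x_n))\cong(\X,\sfd)$, a contradiction. This direct transfer of the \emph{curve} avoids both your detour through Bate's tangent theorem and the delicate $\mathrm{SRA}$ bookkeeping you flag as the weak point; the first version of your (iii)$\Rightarrow$(i) could probably be made to work along these lines, but as written it proves the wrong implication for your cycle and remains a sketch precisely at the accumulation step.
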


\begin{proof}
    The implication (i) $\Rightarrow$ (iii) follows by Theorem \ref{thm:snowflake_currents} because $(\X,\sfd)$ is isometric to one of its $\omega$-limits and using $\cM_1^{\rm reg}(\X)=\cM_1(\X)$, because $(\X,\sfd)$ is complete and separable. The implication (iii) $\Rightarrow$ (ii) is obvious, because to a rectifiable curve we can associate the metric $1$-current defined in \eqref{eq:associate_current}. 
    To prove that (ii) implies (i) it is enough to show that if $(\X,\sfd)$ does not contain rectifiable curves then also every of its ultralimit does not contain rectifiable curves, by Theorem \ref{thm:snowflake_currents}.
    Let $\omega$ be a non-principal ultrafilter and let $(\X_\omega, \sfd_\omega, x_\omega) = \olim(\X,r_n\sfd,x_n)$ be an arbitrary $\omega$-limit of $(\X,\sfd)$. 
    
    If $\olim r_n = 0$, then $\X_\omega = \{x_\omega\}$ because the diameter of $(\X,\sfd)$ is finite; therefore, the $\omega$-limit of the diameters of $(\X,r_n\sfd)$ is equal to $0$. We conclude that $(\X_\omega, \sfd_\omega)$ does not contain non-trivial rectifiable curves in this case.

    If $\olim r_n = r \in (0,+\infty)$, then $(\X_\omega, \sfd_\omega)$ is isometric to $(\X, r\sfd)$ due to the compactness of $(\X,\sfd)$. Therefore, $(\X_\omega, \sfd_\omega)$ does not contain rectifiable curves, since $(\X,\sfd)$ does not.

    It remains the case $\olim r_n = +\infty$. Suppose that $(\X_\omega, \sfd_\omega)$ contains a non-trivial rectifiable curve $\gamma$. We can suppose that $\gamma$ is contained in $B(x_\omega, R)$ for some $R > 0$. We set $\rho_n := Rr_n^{-1}$.
    For $\omega$-a.e.\ $n$ we have $\rho_n \le \rho_0$. Thus, we find $L_0$-biLipschitz maps $\Phi_n\colon (\overline{B}(x_n,\rho_n), \frac{\rho_0}{R}r_n \sfd) \to (\X,\sfd)$. 
    By definition, there is an isometric embedding $\iota \colon (B(x_\omega, R),\sfd_\omega, x_\omega) \to \olim(\overline{B}(x_n,\rho_n),r_n\sfd,x_n)$,    
    with $B(x_\omega,R) \subseteq \X_\omega$. Hence, we can find an isometric embedding
    $$\iota' \colon \left(B(x_\omega, \rho_0), \frac{\rho_0}{R}\sfd_\omega, x_\omega\right) \to  \olim\left(\overline{B}(x_n,\rho_n),\frac{\rho_0}{R}r_n\sfd,x_n\right).$$
    Clearly, $(B(x_\omega, \rho_0),\frac{\rho_0}{R}\sfd_\omega)$ contains a non-trivial rectifiable curve.
    
    By Lemma \ref{lemma:properties_of_ultralimits}, $\olim \left(\Phi_n(\overline{B}(x_n,\rho_n)), \sfd, \Phi_n(x_n)\right)$ contains a non-trivial rectifiable curve. But $\Phi_n(\overline{B}(x_n,\rho_n)) \subseteq \X$ for every $n$, so the $\omega$-limit above is contained in $\olim (\X,\sfd,\Phi_n(x_n))$. By compactness of $(\X,\sfd)$, this $\omega$-limit is actually isometric to $(\X,\sfd,z)$ for some $z\in \X$. We deduce that $(\X,\sfd)$ contains a non-trivial rectifiable curve, which is a contradiction.
\end{proof}

The following classes of metric spaces are quasi-selfsimilar in the sense of Definition \ref{defin:quasi-selfsimilar}, so Proposition \ref{prop:snowflake_quasi-selfsimilar} applies to them: selfsimilar fractals (e.g. the polygaskets of \cite{TysonWu2005}), boundaries at infinity of cocompact Gromov-hyperbolic spaces (see \cite{Sullivan82, Kleiner2006, Cavallucci2025}), Julia
sets of expanding rational fractions (see \cite{Sullivan82}) and some iterated graph systems as defined in \cite{ErikssonBiqueAnttila2024}. In the last class, we notice that not all of the spaces considered in \cite{ErikssonBiqueAnttila2024} are quasigeodesic. For them, Proposition \ref{prop:snowflake_quasi-selfsimilar} becomes not trivial.

Finally, we notice that being biLipschitz equivalent to a snowflake is stable with respect to the operation of products of metric spaces.

 \begin{proposition}[Stability with respect to products]
 \label{prop:products}
     Let $(\X,\sfd), (\Y, \sfd')$ be two metric spaces. Then $(\X \times \Y, \sfd \times \sfd')$ is biLipschitz equivalent to a snowflake if and only if both $(\X,\sfd)$ and $(\Y, \sfd')$ are biLipschitz equivalent to a snowflake.
 \end{proposition}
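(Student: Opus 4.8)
The natural approach is to use the characterization of ``biLipschitz equivalent to a snowflake'' via the \textup{SRA}$_k(\varepsilon,\alpha)$ condition from Theorem \ref{thm:characterization_snowflakes}(iv), or equivalently the negation of line-fitting from (v); both directions should fall out of the structure of the $\ell^\infty$-type product metric $\sfd \times \sfd'$, which satisfies $\max\{\sfd(x,x'),\sfd'(y,y')\} \le (\sfd\times\sfd')((x,y),(x',y')) \le \sfd(x,x') + \sfd'(y,y')$. First I would prove the forward implication: assuming $(\X\times\Y,\sfd\times\sfd')$ is biLipschitz equivalent to a snowflake, I want to show each factor is too. The key observation is that $(\X,\sfd)$ embeds isometrically (up to the fixed factor) into the product via $x \mapsto (x,y_0)$ for any fixed $y_0 \in \Y$, and a subspace of a space satisfying \textup{SRA}$_k(\varepsilon,\alpha)$ again satisfies \textup{SRA}$_k(\varepsilon,\alpha)$ with the same constants (the condition is purely a four-point, now $k$-point, metric inequality inherited by subsets). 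Since being biLipschitz equivalent to a snowflake is equivalent to \textup{SRA}$_k$ for some $k,\varepsilon,\alpha$ by Theorem \ref{thm:characterization_snowflakes}, this gives the forward direction immediately — one does not even need the product structure, just that $\X$ and $\Y$ are isometric to subspaces of a space satisfying an \textup{SRA} condition.

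\textbf{The converse.} For the reverse implication, assume both $(\X,\sfd)$ and $(\Y,\sfd')$ satisfy \textup{SRA} conditions, say $(\X,\sfd)$ satisfies \textup{SRA}$_{k}(\varepsilon,\alpha)$ and $(\Y,\sfd')$ satisfies \textup{SRA}$_{k'}(\varepsilon',\alpha')$; I want to produce constants $K, \eta, \beta$ so that $(\X\times\Y,\sfd\times\sfd')$ satisfies \textup{SRA}$_{K}(\eta,\beta)$. Given $K$ points $(x_1,y_1),\dots,(x_K,y_K)$ in the product with consecutive distances within a factor $(1+\eta)$ of each other, I would look at the two projected chains $x_1,\dots,x_K$ in $\X$ and $y_1,\dots,y_K$ in $\Y$. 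Using $(\sfd\times\sfd')((x_i,y_i),(x_{i+1},y_{i+1})) \le \sfd(x_i,x_{i+1}) + \sfd'(y_i,y_{i+1}) \le 2\max\{\sfd(x_i,x_{i+1}),\sfd'(y_i,y_{i+1})\}$ and the lower bound, one sees that along the chain at least one of the two projected chains carries a definite fraction of the ``length'', and on a long enough sub-run its consecutive distances are comparable (close to equally spaced) so the \textup{SRA} condition for that factor applies to that sub-run, giving a definite length-excess there; combined with the triangle inequality $(\sfd\times\sfd')((x_1,y_1),(x_K,y_K)) \le \sfd(x_1,x_K) + \sfd'(y_1,y_K)$ upstairs this should force $\sum_i (\sfd\times\sfd')((x_i,y_i),(x_{i+1},y_{i+1})) \ge (1+\beta)(\sfd\times\sfd')((x_1,y_1),(x_K,y_K))$. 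A cleaner route to the same end is via condition (iii) of Theorem \ref{thm:characterization_snowflakes}: show that any non-constant geodesic in an $\omega$-limit of $\X\times\Y$ would project to a geodesic in an $\omega$-limit of $\X$ or of $\Y$ — but the $\ell^\infty$-like nature of the product metric makes ``geodesic projects to geodesic'' false in general (geodesics in $\ell^\infty$-products are very flexible), so I expect the direct \textup{SRA} computation, or better the rectifiable-curve formulation, to be the safer path.

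\textbf{Cleanest argument via rectifiable curves and ultralimits.} In fact the slickest proof of the converse uses condition (ii) of Theorem \ref{thm:snowflake_currents}: it suffices to show every $\omega$-limit of $\X\times\Y$ contains no non-constant rectifiable curve. An $\omega$-limit of the product is an $\omega$-limit of $\{(\X\times\Y, r_n(\sfd\times\sfd'), (x_n,y_n))\}$; since $r_n(\sfd\times\sfd') = (r_n\sfd)\times(r_n\sfd')$ and ultralimits commute with products (an $\omega$-admissible sequence in the product is a pair of $\omega$-admissible sequences in the factors, and $\olim \max = \max\olim$ on bounded sequences by the algebraic properties \eqref{eq:ultralimits_algebraic_properties}), the $\omega$-limit of the product is isometric to the $\ell^\infty$-product of an $\omega$-limit $\X_\omega$ of $\X$ and an $\omega$-limit $\Y_\omega$ of $\Y$. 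Now a rectifiable curve $\gamma = (\gamma_1,\gamma_2)$ in $\X_\omega \times \Y_\omega$ has both coordinate curves $\gamma_1,\gamma_2$ $1$-Lipschitz-comparable in length (indeed $\ell(\gamma_1),\ell(\gamma_2) \le \ell(\gamma) < \infty$), hence each $\gamma_i$ is a rectifiable curve in $\X_\omega$ respectively $\Y_\omega$; by hypothesis and Theorem \ref{thm:snowflake_currents}(ii) applied to $\X$ and to $\Y$, both $\gamma_1$ and $\gamma_2$ are constant, so $\gamma$ is constant. Thus $\X\times\Y$ satisfies Theorem \ref{thm:snowflake_currents}(ii) and hence (i). The main obstacle in writing this up carefully is the bookkeeping in the lemma ``an $\omega$-limit of a product is the $\ell^\infty$-product of $\omega$-limits, with the correct basepoints'' — verifying $\omega$-admissibility passes coordinatewise and that the induced metric is exactly the max of the two limit metrics — but this is routine given the properties already recorded in Section \ref{subsec:ultralimits}; the forward direction and the reduction of rectifiable curves to their coordinates are both immediate.
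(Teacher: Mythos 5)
Your proof is correct, and it departs from the paper's argument in both directions in ways worth recording. For the forward implication the paper also goes through ultralimits (a factor that is not biLipschitz equivalent to a snowflake yields an ultralimit of the product containing a rectifiable curve, via the identification of an $\omega$-limit of a product with the product of $\omega$-limits of the factors, which is exactly your ``bookkeeping lemma'' and is quoted in the paper as \cite[Lemma 4.11]{CavallucciSambusetti2024}); your route --- each factor embeds isometrically into the product, and the \textup{SRA}$_k(\varepsilon,\alpha)$ condition is a universally quantified $k$-point inequality, hence inherited by subsets with the same constants --- is more elementary and, as you note, needs nothing about the product structure (one could equally say that a subset of a snowflake is a snowflake). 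For the converse the paper argues by contrapositive with \emph{geodesics}: it extracts a non-trivial geodesic in an ultralimit of the product and invokes \cite[Proposition I.5.3]{BridsonHaefliger} to project it to reparametrized geodesics in the factors. Your instinct that this projection property is delicate for an $\ell^\infty$-type product is sound (only the coordinate realizing the endpoint distance is forced to be geodesic there; the Bridson--Haefliger statement is tailored to the $\ell^2$ product), and since the paper never pins down which product metric $\sfd\times\sfd'$ means, your substitute is the more robust one: work with rectifiable curves, use that the coordinate projections are $1$-Lipschitz so $\ell(\gamma_i)\le\ell(\gamma)<\infty$, and that a curve whose two coordinates are both constant is constant. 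This is insensitive to the choice among the mutually biLipschitz product metrics and avoids the geodesic-projection subtlety altogether. The only ingredient you share with the paper's converse is the product-of-ultralimits identification, which you correctly isolate as the one thing to verify; your abandoned first sketch (a direct \textup{SRA} computation on the product) is indeed unnecessary once that is in place.
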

 \begin{proof}
     Let $\omega$ be a non-principal ultrafilter, $\{r_n\}_{n\in \N}$ be a sequence of positive real numbers and $\{x_n\}_{n\in \N} \subseteq \X$, $\{y_n\}_{n\in \N} \subseteq \Y$. Then
     \begin{equation}
         \label{eq:ultralimit_product}
         \olim (\X\times \Y, r_n(\sfd \times \sfd'), (x_n,y_n)) \cong \olim (\X, r_n\sfd, x_n) \times \olim (\Y, r_n\sfd', y_n),
     \end{equation}
     by \cite[Lemma 4.11]{CavallucciSambusetti2024}.
     
     We prove the only if part. Suppose, without loss of generality, that $(\X,\sfd)$ is not biLipschitz equivalent to a snowflake. Theorem \ref{thm:characterization_snowflakes} implies that there exists an $\omega$-limit of $(\X,\sfd)$ that contains a non-trivial rectifiable curve. By \eqref{eq:ultralimit_product} we can find an $\omega$-limit of $(\X\times \Y, \sfd \times \sfd')$ that contains a non-trivial rectifiable curve, hence it is not biLipschitz equivalent to a snowflake by Theorem \ref{thm:characterization_snowflakes}.

     On the other hand, assume that the product is not biLipschitz equivalent to a snowflake. We can find an $\omega$-limit of the product that contains a non-trivial geodesic, by Theorem \ref{thm:characterization_snowflakes}. Every non-trivial geodesic in a product metric space projects to a reparametrized geodesic in each factor, see \cite[Proposition I.5.3]{BridsonHaefliger}, and the projection in one of the two factors must be non-trivial. Therefore, \eqref{eq:ultralimit_product} implies that one $\omega$-limit of $(\X,\sfd)$ or $(\Y,\sfd')$ contains a non-trivial geodesic. Hence, the corresponding factor is not biLipschitz equivalent to a snowflake, by Theorem \ref{thm:characterization_snowflakes}. In other words, if both factors are biLipschitz equivalent to a snowflake, then the product is.
 \end{proof}

Proposition \ref{prop:products} does not extend to the case of infinitely many products.

\begin{example}[Infinite products of snowflakes]
Let $\{\gamma_n\}_{n\in \N}$ be an increasing sequence of positive real numbers that converge to $1$. Consider the space $\X = \{\{x_n\}_{n\in \N}\,:\, \sum_{n\in \N} \vert x_n \vert^{2\gamma_n} < \infty\} \subseteq \ell^2$. Define the function $\sfd \colon \X \times \X \to \R$, $(\{x_n\}_{n\in \N}, \{y_n\}_{n\in \N}) \mapsto \left(\sum_{n\in \N} \vert x_n - y_n \vert^{2\gamma_n}\right)^{\frac{1}{2}}$. Then $(\X,\sfd)$ is a complete metric space. We claim it is not biLipschitz equivalent to a snowflake. Indeed, for every $k\in \N$ and $\alpha>0$ we can find $k$-points in a $n$-th coordinate direction, for $n$ sufficiently large, that are equally spaced and do not satisfy \eqref{eq:SRA}. The conclusion follows by Theorem \ref{thm:characterization_snowflakes}.
\end{example}

\bibliographystyle{alpha}
\bibliography{biblio}

\newcommand{\etalchar}[1]{$^{#1}$}
\begin{thebibliography}{LDRW18}

\bibitem[AEB24]{ErikssonBiqueAnttila2024}
Riku Anttila and Sylvester Eriksson-Bique.
\newblock Iterated graph systems and the combinatorial {L}oewner property.
\newblock {\em arXiv:2408.15692}, 2024.

\bibitem[AK00]{AK00}
Luigi Ambrosio and Bernd Kirchheim.
\newblock Currents in metric spaces.
\newblock {\em Acta Math.}, 185(1):1--80, 2000.

\bibitem[ARB25]{ArroyoRabasaBouchitte2025}
Adolfo Arroyo-Rabasa and Guy Bouchitté.
\newblock Structural properties of one-dimensional metric currents: Sbv-representations, connectedness and the flat chain conjecture.
\newblock {\em arXiv:2508.08212}, 2025.

\bibitem[Ass83]{Assouad1983}
Patrice Assouad.
\newblock Plongements lipschitziens dans $\mathbb{R}^{n}$.
\newblock {\em Bulletin de la Soci{\'e}t{\'e} Math{\'e}matique de France}, 111:429--448, 1983.

\bibitem[Bat22]{Bate2022}
David Bate.
\newblock Characterising rectifiable metric spaces using tangent spaces.
\newblock {\em Inventiones mathematicae}, 230(3):995--1070, 2022.

\bibitem[BCT{\etalchar{+}}25]{BateCaputoTakacValentineWald2025}
David Bate, Emanuele Caputo, Jakub Takáč, Phoebe Valentine, and Pietro Wald.
\newblock Structure of metric $1$-currents: approximation by normal currents and representation results.
\newblock {\em arXiv:2508.08017}, 2025.

\bibitem[BH13]{BridsonHaefliger}
Martin Bridson and André Haefliger.
\newblock {\em Metric spaces of non-positive curvature}, volume 319.
\newblock Springer Science \& Business Media, 2013.

\bibitem[Cav23]{Cavallucci2023}
Nicola Cavallucci.
\newblock Continuity of critical exponent of quasiconvex-cocompact groups under {G}romov--{H}ausdorff convergence.
\newblock {\em Ergodic Theory and Dynamical Systems}, 43(4):1189--1221, 2023.

\bibitem[Cav25]{Cavallucci2025}
Nicola Cavallucci.
\newblock Ahlfors regular conformal dimension and {G}romov-{H}ausdorff convergence.
\newblock {\em Rev. Mat. Iberoam.}, 41(1):339--364, 2025.

\bibitem[CC25]{CaputoCavallucci2025III}
Emanuele Caputo and Nicola Cavallucci.
\newblock Sobolev spaces via chains in metric measure spaces.
\newblock {\em Proceedings of the Royal Society of Edinburgh: Section A Mathematics}, page 1–51, 2025.

\bibitem[CKTD13]{ConleyKechrisTucker-Drob2013}
Clinton~T. Conley, Alexander~S. Kechris, and Robin~D. Tucker-Drob.
\newblock Ultraproducts of measure preserving actions and graph combinatorics.
\newblock {\em Ergodic Theory and Dynamical Systems}, 33(2):334--374, 2013.

\bibitem[CP11]{CarrascoPiaggio2011}
Matías Carrasco~Piaggio.
\newblock {\em Jauge conforme des espaces m{\'e}triques compacts}.
\newblock PhD thesis, Universit{\'e} de Provence-Aix-Marseille I, 2011.

\bibitem[CS22]{CavallucciSambusetti2022}
Nicola Cavallucci and Andrea Sambusetti.
\newblock Packing and doubling in metric spaces with curvature bounded above.
\newblock {\em Mathematische Zeitschrift}, 300(3):3269--3314, 2022.

\bibitem[CS25]{CavallucciSambusetti2024}
Nicola Cavallucci and Andrea Sambusetti.
\newblock Convergence and collapsing of {CAT}(0)-lattices.
\newblock {\em Adv. Math.}, 482:Paper No. 110555, 2025.

\bibitem[DCT25]{DurandCartagenaTyson2025}
Estibalitz Durand-Cartagena and Jeremy~T. Tyson.
\newblock Metric spaces with small rough angles and the rectifiability of rough self-contracting curves.
\newblock {\em arXiv:2504.03362}, 2025.

\bibitem[DK18]{KapovichDrutu2018}
Cornelia Dru{\c{t}}u and Michael Kapovich.
\newblock {\em Geometric group theory}, volume~63.
\newblock American Mathematical Soc., 2018.

\bibitem[EB24]{ErikssonBique2023preprint}
Sylvester Eriksson-Bique.
\newblock Equality of different definitions of conformal dimension for quasiself-similar and {CLP} spaces.
\newblock {\em Ann. Fenn. Math.}, 49(2):405--436, 2024.

\bibitem[Fed69]{Federer69}
Herbert Federer.
\newblock {\em Geometric measure theory}, volume Band 153 of {\em Die Grundlehren der mathematischen Wissenschaften}.
\newblock Springer-Verlag New York, Inc., New York, 1969.

\bibitem[Kir94]{Kirchheim1994}
Bernd Kirchheim.
\newblock Rectifiable metric spaces: Local structure and regularity of the {H}ausdorff measure.
\newblock {\em Proceedings of the American Mathematical Society}, 121(1):113--123, 1994.

\bibitem[Kle06]{Kleiner2006}
Bruce Kleiner.
\newblock The asymptotic geometry of negatively curved spaces: uniformization, geometrization and rigidity.
\newblock In {\em International Congress of Mathematicians}, volume~2, pages 743--768. Citeseer, 2006.

\bibitem[LDRW18]{LeDonneRajalaWalsberg2018}
Enrico Le~Donne, Tapio Rajala, and Erik Walsberg.
\newblock Isometric embeddings of snowflakes into finite-dimensional {B}anach spaces.
\newblock {\em Proceedings of the American Mathematical Society}, 146(2):685--693, 2018.

\bibitem[Say22]{Sayag2022}
Elad Sayag.
\newblock Ultralimits, amenable actions and entropy.
\newblock {\em arXiv:2204.11112}, 2022.

\bibitem[Sis12]{Sisto2012}
Alessandro Sisto.
\newblock Separable and tree-like asymptotic cones of groups.
\newblock {\em M\"unster J. Math.}, 5:233--248, 2012.

\bibitem[Sul82]{Sullivan82}
Dennis Sullivan.
\newblock Seminar on hyperbolic geometry and conformal dynamical systems.
\newblock {\em preprint IHES}, 1982.

\bibitem[TW05]{TysonWu2005}
Jeremy~T. Tyson and Jang-Mei Wu.
\newblock Characterizations of snowflake metric spaces.
\newblock {\em Ann. Acad. Sci. Fenn. Math.}, 30(2):313--336, 2005.

\end{thebibliography}

\end{document}